\documentclass[12pt]{amsart}

\makeatletter
\@namedef{ver@rerunfilecheck.sty}{2026/07/05}
\makeatother
\newcommand{\RerunFileCheck}[4]{}

\let\degree\undefined

\usepackage{amsfonts, amsthm, amsmath, dsfont}
\usepackage{amssymb}
\usepackage{amscd}
\usepackage{lmodern}
\usepackage[latin2]{inputenc}
\usepackage{t1enc}
\usepackage[mathscr]{eucal}
\usepackage{mathrsfs}
\usepackage{mathtools} 

\usepackage{mathabx} 

\usepackage{graphicx}
\usepackage{graphics}
\usepackage{rotating}
\usepackage{pict2e}
\usepackage{epic}
\usepackage{framed}
\usepackage{booktabs}
\usepackage{makecell}
\usepackage{comment}
\usepackage{indentfirst}
\usepackage{enumerate}

\usepackage{tikz}
\usepackage{pgfplots}
\usetikzlibrary{arrows.meta, tikzmark}

\newcolumntype{V}{!{\vrule width 2pt}}

\usepackage[pagebackref]{hyperref}
\hypersetup{colorlinks=true}
\usepackage{bookmark} 

\usepackage{cite}
\usepackage{color}

\allowdisplaybreaks
\numberwithin{equation}{section}

\theoremstyle{plain}
\newtheorem{theorem}{Theorem}[section]
\newtheorem{corollary}[theorem]{Corollary}
\newtheorem{proposition}[theorem]{Proposition}
\newtheorem{conjecture}[theorem]{Conjecture}
\newtheorem{remark}[theorem]{Remark}
\newtheorem{lemma}[theorem]{Lemma}
\newtheorem{definition}[theorem]{Definition}

\def\N{\mathbb{N}}
\def\fS{\mathfrak{S}}

\def\cM{\mathcal{M}}
\def\tail{\mathsf{tail}}
\def\des{\mathsf{des}}
\def\st{\mathrm{st}}
\def\rU{\mathrm{U}}
\def\rL{\mathrm{L}}
\def\rD{\mathrm{D}}
\def\cU{\mathcal{U}}
\def\cL{\mathcal{L}}
\def\cD{\mathcal{D}}
\def\rI{\mathrm{I}}
\def\cc{\mathrm{c}_2}
\def\nou{\mathrm{nu}}
\def\PK{\Psi_{\mathrm{K}}}
\def\Dyck{\mathrm{Dyck}}
\def\tu{\mathsf{tu}}
\def\rev{\mathrm{rev}}
\def\acb{\mathrm{\underline{13}2}}

\def\LRMax{\mathrm{LRMax}}

\def\multiset#1#2{\ensuremath{\left(\kern-.3em\left(\genfrac{}{}{0pt}{}{#1}{#2}\right)\kern-.3em\right)}}

\begin{document}
	
	\title[When arrow patterns meet classical patterns]{When arrow patterns meet classical patterns}
	
	\author[S. Fu]{Shishuo Fu$^{\ast}$}
	\address[S. Fu]{College of Mathematics and Statistics \& Center for Discrete Mathematics, Chongqing University, Chongqing 401331, China}
	\email{fsshuo@cqu.edu.cn}
	\thanks{$^{\ast}$Corresponding author: Shishuo Fu.}

	\author[Z. Yang]{Zhenghe Yang}
	\address[Z. Yang]{College of Mathematics and Statistics, Chongqing University, Chongqing 401331, China}
	\email{19332762303@163.com}

	\date{\today}

	\begin{abstract} 
	Seeking to bridge the structural divide between a permutation's cycle notation and its one-line notation, Berman and Tenner introduced a novel notion of permutation pattern known as the arrow pattern. Recently, Archer and Laudone initiated a systematic study of arrow pattern avoidance, leaving behind three intriguing conjectures. In this paper, we resolve all three conjectures. First, we enumerate all six subclasses of permutations that simultaneously avoid a classical pattern of length 3 and a fixed arrow pattern of length 3, thereby confirming the first two conjectures. Second, we settle the third conjecture (which involves a different arrow pattern) by providing two independent proofs. These proofs rely on a restriction of Biane's bijection to non-nesting involutions and Krattenthaler's bijection from $321$-avoiding permutations to Dyck paths, respectively.
	\end{abstract}

	\keywords{permutation pattern, arrow pattern, Fibonacci numbers, Catalan numbers, Motzkin paths.
	\newline \indent 2020 {\it Mathematics Subject Classification}. 05A05, 05A15, 05A19.}
	
	\maketitle

	\section{Introduction}
	
	Let $\fS_n$ denote the set of permutations on $[n]:=\{1,2,\ldots,n\}$. Given two permutations $\sigma\in\fS_n$ and $\pi\in\fS_m$, we say that $\sigma$ \emph{contains} $\pi$ as a (classical) \emph{pattern}, if there exist $1\le i_1<i_2<\cdots<i_m\le n$ such that entries $\sigma_{i_1},\sigma_{i_2},\ldots,\sigma_{i_m}$ form a sequence that is \emph{order-isomorphic} to $\pi$. Otherwise $\sigma$ is said to \emph{avoid} $\pi$. We use $\fS_n(\pi)$ to denote the set of $n$-permutations that avoid the pattern $\pi$. The problem of enumerating various classes of pattern-avoiding permutations has spawned a stunning amount of work in enumerative combinatorics; see Kitaev's book exposition~\cite{{Kit11}} for further information on this fast-developing field. 
	
	In a recent study on the so-called ``shallow'' permutations, Berman and Tenner~\cite{BT2022} introduced a new notion of permutation pattern called the \emph{arrow pattern}, whose definition we will recall in next section. Interest in shallow permutations stems from their role in understanding the Diaconis-Graham inequality~\cite{DG1977,Woo2022,CM2024,Lau2026}, which involves three fundamental permutation statistics: length, reflection length, and depth (or total displacement). As revealed by Berman and Tenner~\cite{BT2022}, the arrow pattern serves as a natural framework to simultaneously capture the structural information required by all three statistics.
	
	Archer and Laudone initiated in~\cite{AL2026} the enumeration of arrow pattern avoiding permutations. Towards the end of their paper, they paired arrow pattern avoidance with classical pattern avoidance and made the following three intriguing conjectures. Let $\pi$ be a classical pattern and $\alpha$ be an arrow pattern, then for every $n\in \N$, we denote by $a_n(\pi,\alpha):=|\fS_n(\pi,\alpha)|$ the number of $n$-permutations that avoid simultaneously two patterns $\pi$ and $\alpha$. Further notations and some preliminary results will be given in Section~\ref{sec:preliminaries}. 
	
	\begin{conjecture}[{\cite[Conjecture~7.1]{AL2026}}]\label{conj:Archer-Laudone}
		For $n\ge 2$, we have
		\begin{enumerate}
			\item $a_n(123,(12;1\to 3))=2^n-n$,
			\item $a_n(321,(12;1\to 3))=F_{2n-1}$,
			\item $a_n(321,(12;1\to 2))=M_n$,
		\end{enumerate}
		where $F_n$ is the $n$-th Fibonacci number~\cite[A000045]{OEIS} and $M_n$ is the $n$-th Motzkin number~\cite[A001006]{OEIS}.
	\end{conjecture}	

	 Motivated by this conjecture, in the current paper we carry out a complete enumeration of $\fS_n(\pi,\alpha)$, where $\pi$ ranges over all six classical patterns of length 3 and $\alpha=(12;1\to 3)$. The results are summarized in Table~\ref{tab:3-alpha all} and the proofs are given in Section~\ref{sec:alpha}. Recall that $C_n:=\frac{1}{n+1}\binom{2n}{n}$ is the $n$-th Catalan number~\cite[A000108]{OEIS}. In particular, this confirms items (1) and (2) from Conjecture~\ref{conj:Archer-Laudone}. The remaining part (3) is proved and refined/generalized in Section~\ref{sec:beta}, in two ways both of which are bijective in nature; see Theorems~\ref{thm:beta-Biane} and \ref{thm:beta-Kra}. We conclude the paper with some remarks that hopefully could stimulate future research.

	\begin{table}[h]
    \renewcommand{\arraystretch}{1.2}
    \centering
	\begin{tabular}{|c|c|c|}
	\hline
	$\pi$ & $a_n(\pi,(12;1\to 3))$ & Ref. \\
	\hline
	123 & $2^n-n$ & Thm~\ref{thm:123-alpha}\\
	132 & $C_n$ & Thm~\ref{thm:132-alpha} \\
	213 & $F_{2n-1}$ & Thm~\ref{thm:213-231-q}\\
	231 & $F_{2n-1}$ & Thm~\ref{thm:213-231-q} \\
	312 & $C_n$ & Thm~\ref{thm:132-alpha}\\
	321 & $F_{2n-1}$ & Thm~\ref{thm:321-alpha} \\
	\hline
	\end{tabular}
	\medskip
	\caption{A complete enumeration of $\fS_n(\pi,(12;1\to 3))$ for all $\pi\in\fS_3$}\label{tab:3-alpha all}
	\end{table}

	\section{Preliminaries}\label{sec:preliminaries}

	The two most common ways to describe a permutation $\sigma\in\fS_n$ are \emph{one-line notation} and \emph{cycle notation}. For the one-line notation, we write $\sigma=\sigma_1\sigma_2\cdots\sigma_n$ with each $\sigma_i:=\sigma(i)$ representing the image of $i$ under the bijection $\sigma$. While for the cycle notation, $\sigma$ is written as a product of disjoint cycles, each of which records an orbit of $\sigma$ acting on $[n]$. For instance, the permutation $\sigma=5637421$ in its one-line notation can be rewritten using cycle notation as $\sigma=(1547)(26)(3)$. Because cycles can be cyclically shifted internally and multiplied in any order, the previous expression is equivalent to $(3)(62)(5471)$, as well as several other variations. A cycle notation is said to be \emph{standard} if 1) every cycle begins with its maximal element, and 2) the cycles are ordered increasingly from left to right by these maximal elements. Thus the standard cycle notation for $\sigma$ is $(3)(62)(7154)$.

	The definition of the arrow pattern as given by Berman and Tenner~\cite[Defn.~17]{BT2022} (see also \cite[Section 2]{AL2026}) requires a certain variant of a classical bijection called \emph{Foata's first fundamental transformation}~\cite[Chapter 10.2]{Lot1997}. We denote this variant by $\Phi: \fS_n\to \fS_n$ and recall its definition here for the sake of completeness. Given a permutation $\sigma\in\fS_n$ in its standard cycle notation, we remove all of the parentheses to obtain the one-line notation of a new permutation that we set to be the image $\Phi(\sigma)$. Returning to our running example $\sigma=5637421$, we see that $\Phi(\sigma)=3627154$.

	\begin{definition}[Arrow pattern]
	An \emph{arrow pattern} $\alpha=(\nu;H)$ of size $k$ consists of a string of positive integers $\nu=a_1\ldots a_m$ and a (possibly empty) collection of $h$ arrows $H=\{b_i\to c_i : 1\le i\le h\}$, so that the distinct integers appearing in either $\nu$ or $H$ form the set $[k]=\{1,2,\ldots,k\}$. A permutation $\sigma\in\fS_n$ is said to contain the arrow pattern $\alpha$ if the following conditions are satisfied. Otherwise we say $\sigma$ avoids $\alpha$.
	\begin{enumerate}
		\item There exists a subset $X=\{x_1,\ldots,x_k\}\subseteq [n]$ such that for some indices $t_1<\cdots <t_m$ we have $\sigma_{t_1}\cdots\sigma_{t_m}=x_{a_1}\cdots x_{a_m}$, and
		\item if $\hat{\sigma}=\Phi^{-1}(\sigma)$, then for each $1\le i\le h$, we have $\hat{\sigma}(x_{b_i})=x_{c_i}$.
	\end{enumerate}
	\end{definition}

	For example, consider the arrow patterns $\alpha=(12;1\to 3)$, $\beta=(231;1\to 4)$, and the permutation $\pi=3627154$ with $\hat{\pi}:=\Phi^{-1}(\pi)=5637421$. The permutation $\pi$ contains multiple occurrences of $12$ in the classical sense, for instance $36$, $25$, and $15$. Among these three occurrences, only $25$ is part of an occurrence of $\alpha$ since we can take $X=\{2,5,6\}$. In contrast, $\pi$ avoids the arrow pattern $\beta$, since for each occurrence of $231$ in $\pi$, no integer can be chosen to be the ``$4$'' in $\beta$. For example, $361$ is an occurrence of $231$ in $\pi$ in the classical sense, but $\hat{\pi}(1)=5<6$, preventing $361$ from being part of an occurrence of $\beta$ in $\pi$.

	Since all results displayed in Table~\ref{tab:3-alpha all} involve the arrow pattern $\alpha=(12;1\to 3)$, we prepare ourselves with the following characterization of the permutations that avoid $\alpha$. 

	\begin{lemma}\label{lem:char of alpha}
	A permutation $\sigma\in\fS_n$ avoids the arrow pattern $\alpha=(12;1\to 3)$ if and only if it is in one of the following two cases.
	\begin{enumerate}
		\item $\sigma_n=n$ and $\sigma':=\sigma_1\sigma_2\cdots\sigma_{n-1}\in\fS_{n-1}(\alpha)$;
		\item $\sigma=\tau_1\cdots\tau_i \eta_1'\cdots\eta_{n-k}'\tau_{i+2}\cdots\tau_{k}k$, for some $0\le i < k\le n-1$, such that $\tau:=\tau_1\cdots\tau_i k\tau_{i+2}\cdots\tau_k\in\fS_{k}(\alpha)$ and $\eta:=\eta_1\cdots\eta_{n-k}\in\fS_{n-k}(\alpha)$ with $\eta_j':=\eta_j+k$ for every $1\le j\le n-k$.
	\end{enumerate}
	\end{lemma}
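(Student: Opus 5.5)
The plan is to translate containment of $\alpha=(12;1\to 3)$ into a statement about $\hat\sigma=\Phi^{-1}(\sigma)$ read off the one-line word $\sigma$. Since $\sigma$ is the concatenation of the standard cycles of $\hat\sigma$, the cycle leaders are exactly the left-to-right maxima of $\sigma$. For an entry $x$, $\hat\sigma(x)$ is the entry immediately to its right if that entry is not a left-to-right maximum. Otherwise $\hat\sigma(x)$ is the leader of the cycle containing $x$. Then $\sigma$ contains $\alpha$ iff there are entries $x_1<x_2$ with $x_1$ to the left of $x_2$ and $\hat\sigma(x_1)>x_2$.

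Let $k=\sigma_n$, and treat the case $k=n$ first. Then $n$ is a fixed point of $\hat\sigma$ forming the last cycle, and deleting it does not change $\hat\sigma$ on $[n-1]$. Moreover $n$ cannot play the role of $x_1$, since nothing larger follows it. It cannot play the role of $x_2$ either, since that would force $\hat\sigma(x_1)>n$. Hence $\sigma$ avoids $\alpha$ iff $\sigma'$ does, which gives case (1).

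Now suppose $k<n$, and first prove the forward direction. The last cycle has leader $n$ and ends with $k$, so $\hat\sigma(k)=n$. Every $y<k$ lies to the left of $k$, so avoidance with $x_2=k$ forces $\hat\sigma(y)\le k$ for all $y<k$. Thus the only way the orbit of $n$ can pass from values $>k$ to values $\le k$ and return above $k$ is through $k$ itself, and $k$ is the last entry. Therefore the last cycle reads $n$, then a run of values $>k$, then values $<k$, ending in $k$. All earlier cycles use only values $<k$. Consequently the values $k+1,\dots,n$ occupy a contiguous factor of $\sigma$ beginning with $n$, say at positions $i+1,\dots,i+n-k$.

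Writing this factor as $\eta'$, define $\eta$ as its standardization and $\tau$ by contracting the factor to the single letter $k$. Then $\Phi^{-1}(\eta)$ is the cycle $(n\,\cdots)$ restricted to the big values and closed up. Similarly $\Phi^{-1}(\tau)$ is $\hat\sigma$ with the big arc contracted to $k$. Since the factor begins with $n$, which is a left-to-right maximum, $\tau$ is still a concatenation of standard cycles, and its last entry is $k$. One then checks that every occurrence of $\alpha$ in $\eta$ or in $\tau$ lifts to an occurrence in $\sigma$. Pairs of big values keep their successors, except the last big entry, which in $\eta$ maps to $n$; that case is handled using the condition $\hat\sigma(\text{last big})=$ its successor, which is $<k$, together with a short argument. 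Pairs inside $\tau$ keep their $\hat\sigma$-values, with $k$ standing for the block.

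Conversely, given $\tau,\eta$ as in case (2), form $\sigma$ and take an occurrence $x_1<x_2$ with $\hat\sigma(x_1)>x_2$. I will split according to whether $x_1$ and $x_2$ are both small, both big, or mixed. If $x_1$ is small, then $\hat\sigma(x_1)\le k$ except when $x_1$ is the entry just before $n$; in that case $\hat\sigma(x_1)=n$, and $\tau$ has $\hat\tau(x_1)=k$. In every small/mixed subcase the occurrence projects to one in $\tau$. If both entries are big, the occurrence lies in $\eta$, because $\hat\sigma$ agrees with $\Phi^{-1}(\eta)$ on big values apart from the last big one, whose image is small and so gives no occurrence. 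The main obstacle is this bookkeeping of the two boundary entries of the block, namely the entry preceding $n$ and the last big entry, in both directions. I would organize it by writing $\hat\sigma$, $\hat\tau$ and $\hat\eta$ explicitly on those entries first.
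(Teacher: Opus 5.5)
Your translation of containment into ``$x_1<x_2$, $x_1$ left of $x_2$, $\hat\sigma(x_1)>x_2$'' is correct. So are case (1) and the two key facts $\hat\sigma(k)=n$ and $\hat\sigma(y)\le k$ for all $y<k$. The contract-and-standardize strategy is also the natural one; the paper omits this proof and defers to Archer--Laudone. However, two steps in your execution are false, one in each direction.

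\textbf{Forward direction.} The claim ``all earlier cycles use only values $<k$'' is false, and so is the conclusion that the block of values $k+1,\dots,n$ begins with $n$. For instance, $\sigma=1342$ avoids $\alpha$, has $k=2$, and has $\hat\sigma=(1)(3)(4\,2)$. Here the earlier cycle $(3)$ consists of a large value, and the block is $34$. Similarly, $\sigma=231$ has $\hat\sigma=(2)(3\,1)$.

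The missing observation is that every cycle other than the last is homogeneous. If such a cycle contains some $y<k$, then, since $\hat\sigma(y)\le k$ and $k$ lies in the last cycle, its whole orbit stays below $k$. By the standard ordering, the all-large cycles therefore sit between the all-small cycles and the last cycle. So $k+1,\dots,n$ still form a contiguous factor, but it starts with the smallest large cycle leader. That leader is still a left-to-right maximum, which is all you actually use.

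Accordingly, $\Phi^{-1}(\eta)$ is these large cycles together with the closed-up large arc of the last cycle, not a single cycle. Also, $\tau$ is obtained by replacing the block by $k$ \emph{and deleting the final $k$}. Its last entry is $\tau_k$, and $k$ sits at position $i+1$.

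\textbf{Converse direction.} You misidentify the boundary entries. The entry $\tau_i$ immediately left of the block is followed by a left-to-right maximum. By your own rule, $\hat\sigma(\tau_i)$ is therefore the leader of its own cycle, which is $<k$ and equals $\hat\tau(\tau_i)$, where $\hat\tau:=\Phi^{-1}(\tau)$. The unique preimage of $n$ is the final letter $k$.

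Taken at face value, your claim would actually refute the converse. If $\hat\sigma(\tau_i)=n$, any large $x_2<n$ would give an occurrence $(\tau_i,x_2)$ in $\sigma$ that does not project to $\tau$.

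The correct bookkeeping has three parts:
\begin{enumerate}
\item $\hat\sigma$ and $\hat\tau$ agree on $[k-1]$ and take values in $[k]$ there. So a small $x_1$ forces $x_2<\hat\sigma(x_1)\le k$, and the occurrence lies in $\tau$; mixed pairs never arise.
\item $\hat\sigma$ agrees with the shifted $\Phi^{-1}(\eta)$ on large values except at the last entry of the block. That entry has no large entry to its right, so it never serves as $x_1$. This also disposes of your vague ``short argument'' in the forward direction, where the successor of that entry may be $k$ itself rather than $<k$.
\item $k$, being the last letter of $\sigma$, never serves as $x_1$.
\end{enumerate}
With these corrections, both directions go through along the lines you sketched.
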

	\begin{proof}
	This recursive characterization was utilized in the proof of \cite[Theorem 3.5]{AL2026} to show that $a_n(12;1\to 3)=S_{n-1}$, the $(n-1)$-st \emph{large Schr\"oder number}~\cite[A006318]{OEIS}. The proof is thus omitted.
	\end{proof}

	The two arrow patterns appearing in Conjecture~\ref{conj:Archer-Laudone} will be frequently mentioned in what follows, so we abbreviate them throughout as $\alpha:=(12;1\to 3)$ and $\beta:=(12;1\to 2)$. The notions of ``direct sum'', ``skew sum'', and ``standardization'' will be useful for our ensuing analysis and we recall them here. Let $\sigma\in\fS_n$ and $\tau\in\fS_m$ be two permutations, then the \emph{direct sum} and the \emph{skew sum} of $\sigma$ and $\tau$ are permutations of length $n+m$ defined respectively as
	$$\sigma\oplus\tau := \sigma_1\cdots\sigma_n(n+\tau_1)\cdots(n+\tau_m),$$
	and
	$$\sigma\ominus\tau := (m+\sigma_1)\cdots (m+\sigma_n)\tau_1\cdots\tau_m.$$
	For a word $w$ formed by letters from $\{\ell_1,\ldots,\ell_n\}_{<}$, its \emph{standardization}, denoted $\st(w)$, is defined as the permutation derived from $w$ by replacing $\ell_i$ with $i$, for every $1\le i\le n$. The \emph{intervals} in this paper contain only integers, so for $a,b\in\N$, $[a,b]$ represents the set of integers $\{a,a+1,\ldots,b-1,b\}$. Given a permutation $\sigma$, let 
	$$\LRMax(\sigma)=\{m_1=\sigma_1,m_2,\ldots,m_k=n\}$$ 
	denote the set of \emph{left-to-right maxima} of $\sigma$. I.e., if $m_i=\sigma_t$ for some $1\le i\le k$ and $1\le t\le n$, then we must have $\sigma_j<m_i$ for all $1\le j< t$.

\section{Arrow pattern \texorpdfstring{$(12;1\to 3)$}{alpha}}\label{sec:alpha}
The goal of this section is to prove all six enumerative results collected in Table~\ref{tab:3-alpha all}. We divide them into two subsections.

\subsection{The three non-Fibonacci cases}
Let us first deal with the cases of $\pi=123$, $132$, and $312$.

\begin{theorem}\label{thm:123-alpha}
For $n\ge 2$, we have $a_n(123,\alpha)=2^n-n$, thus Conjecture~\ref{conj:Archer-Laudone} (1) holds true.
\end{theorem}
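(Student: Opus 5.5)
The plan is to feed the recursive description of Lemma~\ref{lem:char of alpha} into the extra requirement of $123$-avoidance, obtain a linear recurrence for $a_n:=a_n(123,\alpha)$, and solve it. The decomposition in Lemma~\ref{lem:char of alpha} is unique: case (1) is $\sigma_n=n$, and in case (2) the value $k=\sigma_n<n$ determines $i$, $\tau$ and $\eta$. So we may count the two cases separately.

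\emph{Case (1), $\sigma_n=n$.} For $\sigma$ to avoid $123$, the prefix $\sigma'$ must avoid $12$. Hence $\sigma'$ is decreasing, and it trivially avoids $\alpha$, since $\alpha$ needs an occurrence of $12$. This case contributes exactly one permutation, $(n-1)\cdots 21\,n$.

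\emph{Case (2), $\sigma=\tau_1\cdots\tau_i\,\eta'\,\tau_{i+2}\cdots\tau_k\,k$ with $k\le n-1$.} Every entry of $\sigma$ other than the final $k$ and the block $\eta'$ is smaller than $k$ and precedes it. By $123$-avoidance these $k-1$ entries must therefore form a decreasing sequence. The block $\eta'$ consists of the largest values, so no entry larger than an ascent inside $\eta'$ can appear after it; hence the only constraints on $\eta$ are the following.
\begin{itemize}
\item[(a)] $\eta$ avoids $123$ (and $\alpha$, by the lemma).
\item[(b)] If $i\ge 1$, then $\tau_1<\eta'_a<\eta'_b$ would give a $123$, so $\eta$ has no ascent and must be decreasing.
\end{itemize}
Thus $\tau$ is the decreasing arrangement of $[k-1]$ with $k$ inserted at position $i+1$. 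This leads to the following count.
\begin{itemize}
\item For $i=0$ we get $a_{n-k}$ choices of $\eta$.
\item For each $1\le i\le k-1$ we get exactly one permutation.
\end{itemize}
The second count holds provided these special $\tau$ genuinely lie in $\fS_k(\alpha)$. I expect this to be the main (if modest) obstacle. I would check it via Lemma~\ref{lem:char of alpha} itself, by induction on $k$. For $i=k-1$, $\tau$ ends in $k$ and its prefix is decreasing, so case (1) applies. Otherwise $\tau$ ends in $1$, and I would exhibit it as case (2) with "$k$" equal to $1$ and $\eta$ a shorter permutation of the same shape (decreasing with its maximum inserted), so induction applies. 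Alternatively, I would verify directly with $\Phi^{-1}$: an occurrence of $12$ in $\tau$ must be $\tau_j\,k$ with $j\le i$, and the cycle structure of $\Phi^{-1}(\tau)$ shows $\hat\tau(\tau_j)$ is not larger than $k$.

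Assuming this, we obtain $a_1=1$ and, for $n\ge 2$,
$$a_n=1+\sum_{k=1}^{n-1}\bigl(a_{n-k}+(k-1)\bigr).$$
Sanity checks give $a_2=2$ and $a_3=5$. Subtracting the recurrence for $a_{n-1}$ from that for $a_n$ yields $a_n=2a_{n-1}+n-2$ for $n\ge 3$. Now induct on $n$: if $a_{n-1}=2^{n-1}-(n-1)$, then $a_n=2^n-2n+2+n-2=2^n-n$. This proves the theorem, and hence Conjecture~\ref{conj:Archer-Laudone}~(1).
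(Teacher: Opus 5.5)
Your proposal is correct and follows the paper's proof essentially step for step. It uses the same split via Lemma~\ref{lem:char of alpha} into $\sigma_n=n$ (one permutation), $i=0$ (contributing $a_{n-k}$) and $i\ge 1$ (contributing $k-1$), the same summed recurrence, and the same reduction to $a_n=2a_{n-1}+n-2$ followed by induction. The membership $\tau\in\fS_k(\alpha)$ that you flag as the main obstacle (and which the paper leaves implicit) is immediate: every occurrence of $12$ in $\tau$ has $k$ as its larger letter, so there is no third letter $x_3>k$ with $\hat\tau(x_1)=x_3$.
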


\begin{proof}
For every $n\ge 2$, we write $a_n:=a_n(123,\alpha)$ and aim to derive a recurrence for $a_n$. Take any permutation $\sigma\in\fS_n(123,\alpha)$. If $\sigma_n=n$ then to avoid $123$ we must have $\sigma_1\cdots\sigma_{n-1}=(n-1)(n-2)\cdots 2\, 1$, which contributes $1$ to the count of $a_n$.

Otherwise, we are in case (2) of Lemma~\ref{lem:char of alpha} and thus can assume the decomposition
\begin{align}
	\sigma=\tau_1\cdots\tau_i \eta_1'\cdots\eta_{n-k}'\tau_{i+2}\cdots\tau_{k}k,\label{decomp}
\end{align}
for some $0\le i < k\le n-1$, where
\begin{align*}
	&\tau:=\tau_1\cdots\tau_i\,k\,\tau_{i+2}\cdots\tau_k\in \fS_k(\alpha), \text{ and}\\
	&\eta:=\eta_1\cdots\eta_{n-k}\in \fS_{n-k}(\alpha) \text{ with } \eta'_j=\eta_j+k,\; 1\le j\le n-k.
\end{align*}
 As we take into account the further restriction that $\sigma$ avoids $123$, there are two subcases to consider.

\begin{itemize}
\item Case 1: $i=0$. We see \eqref{decomp} becomes $\sigma=\eta_1'\cdots\eta_{n-k}'\tau_{2}\cdots\tau_{k}k$. The presence of $k$ combined with the $123$-avoiding condition results in $\tau_{2}\cdots\tau_{k}=(k-1)(k-2)\cdots 1$. In particular $\tau_{2}\cdots\tau_{k}$ is already $123$-avoiding. Moreover, since every integer contained in the prefix $\eta_1'\cdots\eta_{n-k}'$ is larger than every integer from the suffix $\tau_2\cdots\tau_k k$, any occurrence of pattern $123$ must be completely confined in $\eta_1'\cdots\eta_{n-k}'$. Consequently, it suffices to require that $\eta$ belongs to $\fS_{n-k}(123,\alpha)$. Conversely, every $\eta\in\fS_{n-k}(123,\alpha)$ increased letterwise by $k$ and then concatenated with $(k-1)(k-2)\cdots 1\, k$ gives rise to a unique permutation in Case 1. Thus, for a fixed $k$, the collective contribution from this case is $a_{n-k}$.

\item Case 2: $i>0$. Again, applying the $123$-avoiding condition, we can deduce that $\eta=(n-k)(n-k-1)\cdots 1$ and $\tau_1\cdots\tau_i\tau_{i+2}\cdots\tau_k=(k-1)(k-2)\cdots 1$, with the value of $i$ ($1\le i\le k-1$) dictating the cut-off between $\tau_i$ and $\tau_{i+2}$. Hence for a fixed $k\ge 2$, the contribution from this case is $k-1$.
\end{itemize}

Summarizing all cases, we conclude that
\[
a_n
=
1+\sum_{k=1}^{n-1}a_{n-k}+\sum_{k=2}^{n-1}(k-1)
=
1+\sum_{m=1}^{n-1}a_m+\frac{(n-1)(n-2)}{2},
\]
which can be iterated to produce the desired recurrence relation that holds for all $n\ge 2$:
\begin{align}\label{rec for a_n}
a_n=2a_{n-1}+n-2.
\end{align}
It is clear that the sequence $\{2^n-n\}_{n\ge 2}$ also satisfies \eqref{rec for a_n}. Together with the base case $a_2=2=2^2-2$, this completes the proof by induction.
\end{proof}

\begin{theorem}\label{thm:132-alpha}
For $n\ge 2$, we have $a_n(132,\alpha)=a_n(312,\alpha)=C_n$, the $n$-th Catalan number.
\end{theorem}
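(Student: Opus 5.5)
The plan is to treat the two patterns differently. For $132$ we will show that $132$-avoidance already forces $\alpha$-avoidance, so $a_n(132,\alpha)=|\fS_n(132)|=C_n$. For $312$ we will use Lemma~\ref{lem:char of alpha} to get the Catalan recurrence. (A quick check supports the first claim: $a_4(\alpha)=S_3=22$, so exactly two permutations of length $4$ contain $\alpha$, which is consistent with $\fS_4(132)\subseteq\fS_4(\alpha)$.)

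\emph{The case $\pi=132$.} We show that every $\sigma$ containing $\alpha$ contains $132$.
\begin{enumerate}
\item Suppose $\sigma$ contains $\alpha$, with $x_1<x_2<x_3$. Then $x_1$ precedes $x_2$ in $\sigma$, and $\hat\sigma(x_1)=x_3$ where $\hat\sigma=\Phi^{-1}(\sigma)$. Recall that $\sigma$ is the standard cycle notation of $\hat\sigma$ with the parentheses erased.
\item If $x_1$ is not the last letter of its cycle, then $x_3$ is the letter immediately after $x_1$ in $\sigma$. Since $x_2$ comes after $x_1$ and differs from $x_3$, it comes after $x_3$, and $x_1x_3x_2$ is an occurrence of $132$.
\item If $x_1$ ends its cycle, then $x_3$ is the maximum of that cycle and starts it. Since $x_2$ lies after $x_1$, it belongs to a later cycle. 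That cycle begins with its maximum $M$, and by standardness $M>x_3>x_2$. Moreover $M$ sits after $x_1$ and before or at $x_2$; $M\neq x_2$ because $M>x_2$. Hence $x_1Mx_2$ is an occurrence of $132$.
\end{enumerate}
Thus $\fS_n(132)\subseteq\fS_n(\alpha)$, and $a_n(132,\alpha)=C_n$.

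\emph{The case $\pi=312$.} Write $b_n:=a_n(312,\alpha)$ with $b_0=1$, and apply Lemma~\ref{lem:char of alpha}.
\begin{enumerate}
\item In case (1) we have $\sigma=\sigma'n$. Appending $n$ creates no $312$, so this case contributes $b_{n-1}$.
\item In case (2), $\sigma=\tau_1\cdots\tau_i\eta'\tau_{i+2}\cdots\tau_k k$. If the suffix $\tau_{i+2}\cdots\tau_k$ were nonempty, then $\eta'_1,\tau_{i+2},k$ would form a $312$ (recall $\eta'$ is nonempty since $k\le n-1$). So $i=k-1$ and $\sigma=\rho\,\eta'\,k$, where $\rho=\tau_1\cdots\tau_{k-1}\in\fS_{k-1}$.
\item Every letter of $\rho$ is smaller than every letter of $\eta'$ and smaller than $k$, and $k$ is the last letter. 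We will check that no occurrence of $312$ can mix the blocks $\rho$, $\eta'$, $k$. Hence $\sigma$ avoids $312$ if and only if $\rho$ and $\eta$ do.
\item By case (1) of the lemma, $\tau=\rho k$ avoids $\alpha$ if and only if $\rho$ does. So $\rho$ ranges over $\fS_{k-1}(312,\alpha)$ and $\eta$ over $\fS_{n-k}(312,\alpha)$, independently.
\end{enumerate}
This gives
\[
b_n=b_{n-1}+\sum_{k=1}^{n-1}b_{k-1}b_{n-k}=\sum_{k=1}^{n}b_{k-1}b_{n-k},
\]
where the term $b_{n-1}$ has been absorbed as the summand $k=n$. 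This is the Catalan recurrence, and with $b_0=b_1=1$ it gives $b_n=C_n$.

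The main obstacle is the block-interaction check in item (3) of the $312$ case, together with confirming that the decomposition of Lemma~\ref{lem:char of alpha} is unique, so that nothing is double counted. I expect both to be routine, since the value sets of the three blocks are ordered and $k$ is in the final position.
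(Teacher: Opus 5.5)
Your proof is correct. The $312$ half matches the paper; the $132$ half takes a different route.

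\textbf{How the paper argues.} The paper treats both patterns uniformly. For $132$, plugging $132$-avoidance into Lemma~\ref{lem:char of alpha} forces $i=0$, i.e.\ $\sigma=\eta\ominus(\tau k)$, and this gives the Catalan recurrence. The $312$ case is declared analogous and omitted.

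\textbf{Your $312$ half.} This is exactly that omitted argument: there $i=k-1$ is forced. The block-interaction and uniqueness checks you flag are indeed routine, since $k=\sigma_n$ determines the decomposition.

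\textbf{Your $132$ half.} Here you instead show $\fS_n(132)\subseteq\fS_n(\alpha)$ directly from the standard cycle notation of $\Phi^{-1}(\sigma)$. The paper only alludes to this route in the Remark after the theorem (via \eqref{132-Catalan} and the bijection $\phi$), without carrying it out.

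\textbf{Comparison.} Your route gives the set equality $\fS_n(132,\alpha)=\fS_n(132)$ rather than just equinumerosity, and it needs no recursive decomposition. The cost is importing the classical count $|\fS_n(132)|=C_n$. The paper's recurrence is self-contained given the lemma and underlies the recursive bijection $\phi$.

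\textbf{Your idea also handles $312$, more easily.} Suppose $\hat\sigma(x_1)=x_3$, and let $M$ be the maximum of the cycle containing $x_1$. Then $M\ge x_3>x_2$, and $M$ is written strictly before $x_1$, so $Mx_1x_2$ is an occurrence of $312$. Thus $\fS_n(312)\subseteq\fS_n(\alpha)$ as well, and you could dispense with Lemma~\ref{lem:char of alpha} entirely.
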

\begin{proof}
Let us denote $b_n:=a_n(132,\alpha)$ and $c_n:=a_n(312,\alpha)$. Clearly $b_0=c_0=1$ and $b_1=c_1=1$. It suffices to show that for $n\ge 2$,
\begin{align}
	b_n &= b_{n-1}+\sum_{k=1}^{n-1} b_{k-1}b_{n-k},\label{rec:b_n}\\
	c_n &= c_{n-1}+\sum_{k=1}^{n-1} c_{k-1}c_{n-k},\label{rec:c_n}
\end{align}
each of which is the well-known convolutive recursion for Catalan numbers. Indeed, take any permutation $\sigma\in\fS_n(132,\alpha)$, we apply Lemma~\ref{lem:char of alpha} together with the $132$-avoiding constraint to deduce the following two cases. 
\begin{itemize}
	\item $\sigma_n=n$ and $\sigma_1\sigma_2\cdots\sigma_{n-1}\in\fS_{n-1}(132,\alpha)$. Conversely, appending $n$ to each permutation in $\fS_{n-1}(132,\alpha)$ gives rise to an $n$-permutation that avoids both $132$ and $\alpha$. This case explains the term $b_{n-1}$ in \eqref{rec:b_n}.
	\item $\sigma=\eta_1'\cdots\eta_{n-k}'\tau_{1}\cdots\tau_{k-1}k$ for some $1\le k\le n-1$, such that 
	$$\tau_1\cdots\tau_{k-1}\in\fS_{k-1}(132,\alpha) \text{ and } \eta_1\cdots\eta_{n-k}\in\fS_{n-k}(132,\alpha),$$ 
	with $\eta_j':=\eta_j+k$ for every $1\le j\le n-k$. Conversely, given any two permutations $\tau\in\fS_{k-1}(132,\alpha)$ and $\eta\in\fS_{n-k}(132,\alpha)$, the skew sum $\eta\ominus(\tau\, k)$ is in $\fS_n(132,\alpha)$. Hence this case corresponds to the summation in \eqref{rec:b_n}.
\end{itemize}
Combining the above two cases we arrive at \eqref{rec:b_n}. The proof of \eqref{rec:c_n} is analogous and thus omitted.
\end{proof}

\begin{remark}
It is worth noting that a bijection, say 
$$\phi:\fS_n(132,\alpha)\to \fS_n(312,\alpha),$$
can be recursively constructed to show that $a_n(132,\alpha)=a_n(312,\alpha)$. Initially, we set $\phi(1)=1$, $\phi(12)=12$, and $\phi(21)=21$. Suppose $\phi$ is already defined for those permutations of length smaller than a certain $n\ge 3$, then for a given $\sigma=\eta\ominus(\tau k)\in\fS_n(132,\alpha)$ for some $1\le k\le n$, $\eta\in\fS_{n-k}(132,\alpha)$, and $\tau\in\fS_{k-1}(132,\alpha)$, we define 
$$\phi(\sigma)=(\phi(\tau)\oplus\phi(\eta))k.$$

Furthermore, it is well-known that $|\fS_n(132)|=C_n$. In view of the trivial inclusion $\fS_n(132,\alpha)\subseteq\fS_n(132)$, we see that actually the following relation holds for all $n\ge 1$:
\begin{align}\label{132-Catalan}
	\fS_n(132)=\fS_n(132,\alpha).
\end{align}
By directly establishing \eqref{132-Catalan} from the definition of the arrow pattern $\alpha$ and applying the bijection $\phi$, we arrive at an alternative proof of Theorem~\ref{thm:132-alpha}.
\end{remark}

\subsection{The odd-indexed Fibonacci numbers}

In this subsection we show that the remaining three classes (i.e., for $\pi=213$, $231$, or $321$) are all enumerated by the odd-indexed Fibonacci numbers $\{F_{2n-1}\}_{n\ge 1}=\{1,2,5,13,34,89,\ldots\}$, thereby completing the enumerations summarized in Table~\ref{tab:3-alpha all}. Let $F(x):=1+\sum_{n\ge 1}F_{2n-1}x^n$ be the generating function of the odd-indexed Fibonacci numbers. It is known (see for instance \cite[Eq.~(2.2.7)]{Wil2005}) that
\begin{align}\label{gf:oddFibo}
	F(x) &= \frac{1-2x}{1-3x+x^2}.
\end{align}

For the two cases with $\pi=213$ and $\pi=231$, we are able to establish a much stronger result; see Theorem~\ref{thm:213-231-q} below. To that end, we need the following version of $(q,t)$-Catalan numbers introduced by Fu, Tang, Han, and Zeng~\cite{FTHZ2019}. For every $n\ge 1$, let $C_n(t,q)$ be the coefficient of $x^n$ in the following continued fraction expansion
\begin{align}\label{def-q-nara}
C(t,q,x):=\sum_{n=0}^\infty C_n(t,q) x^n=
\cfrac{1}{
1-\cfrac{x}{1-\cfrac{tx}{
\cfrac{\ddots}{1-\cfrac{q^{k-1}x}{1-\cfrac{tq^{k-1}x}{\ddots}
}}}}}\, .
\end{align}
It was shown in \cite[Theorem~1.1]{FTHZ2019} that $C_n(t,q)$ has ten interpretations in terms of the distributions of various statistics over pattern avoiding permutations. In particular, the following interpretations (corresponding to pairs \#1 and \#9 in \cite[Table~1]{FTHZ2019}) play a key role in our proof of Theorem~\ref{thm:213-231-q}.
\begin{theorem}
For every $n\ge 1$, we have
\begin{align}\label{id:213-231-q}
C_n(t,q) = \sum_{\sigma\in\fS_n(213)}t^{\des(\sigma)}q^{\acb(\sigma)} = \sum_{\sigma\in\fS_n(231)}t^{\des(\sigma)}q^{\acb(\sigma)},
\end{align}
where $\des(\sigma):=|\{i\in[n-1]:\sigma_i>\sigma_{i+1}\}|$ is the number of \emph{descents} of $\sigma$, and
\begin{align*}
	\acb(\sigma) := |\{(i,j)\in[n]^2: 1<i+1<j\le n,~\sigma_i<\sigma_j<\sigma_{i+1}\}|.
\end{align*}
\end{theorem}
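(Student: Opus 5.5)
This identity is \cite[Theorem~1.1]{FTHZ2019} (pairs \#1 and \#9 of \cite[Table~1]{FTHZ2019}), so in the paper it suffices to cite it. For a self-contained argument I would use Flajolet's combinatorial theory of continued fractions.

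\textbf{Step 1 (path reading of the fraction).} The expansion \eqref{def-q-nara} is an S-fraction with coefficients $\lambda_{2k-1}=q^{k-1}$ and $\lambda_{2k}=tq^{k-1}$ for $k\ge 1$. By Flajolet's lemma, $C_n(t,q)$ is the generating function of Dyck paths of semilength $n$ in which a down step starting at height $h$ gets weight $\lambda_h$. Concretely, down steps from even heights $2k$ carry a factor $t$, and every down step from height $2k-1$ or $2k$ carries $q^{k-1}$.

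\textbf{Step 2 (bijection for $\fS_n(231)$).} I would build a recursive bijection $\Theta:\fS_n(231)\to \Dyck_n$ from the standard decomposition $\sigma=\alpha\, n\,\beta$. Here $231$-avoidance forces every entry of $\alpha$ to be smaller than every entry of $\beta$. First I would record how the statistics decompose:
\begin{itemize}
\item Descents: $\des(\sigma)=\des(\alpha)+\des(\beta)+[\beta\neq\emptyset]$.
\item Occurrences of $\acb$: the new ones beyond $\acb(\alpha)+\acb(\beta)$ come only from the adjacent ascent (last letter of $\alpha$, $n$), paired with each later letter of $\beta$. This contributes $|\beta|$ when $\alpha\neq\emptyset$.
\end{itemize}
A naive recursion $\sigma\mapsto \mathrm{U}\,\Theta(\alpha)\,\mathrm{D}\,\Theta(\beta)$ places the $q$-weight on the wrong block. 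Indeed, the resulting functional equation $C(x)=1+xC(x)\bigl(1-t+tC(qx)\bigr)$ does not match the fraction, as I checked on the shift in $q$. So the decomposition must be chosen differently. I would instead decompose with respect to the \emph{first} letter, or equivalently take the $\rev$-complement symmetry into account. The aim is that each new $\acb$-occurrence is charged to a pair of steps lying \emph{inside} the first-return factor, so that its heights get shifted by $2$. Under this charging, each factor $q$ corresponds to one level-pair, matching $\lambda_{2k-1},\lambda_{2k}\mapsto q\lambda_{2k-1},q\lambda_{2k}$. The descent should then be read off as a down step at even height.

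\textbf{Step 3 (the $213$ case).} For $\fS_n(213)$ I would use the analogous decomposition $\sigma=\alpha\,n\,\beta$, where now every entry of $\alpha$ exceeds every entry of $\beta$. I would repeat the bookkeeping and verify that it yields the same functional equation with a two-level height shift. Alternatively, I would look for a direct bijection $\fS_n(213)\to\fS_n(231)$ preserving $(\des,\acb)$, built recursively as in the map $\phi$ of the Remark above.

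\textbf{Main obstacle.} The key difficulty is Step 2: choosing the decomposition (or a refined catalytic variable tracking, e.g., the number of right-to-left minima that are still available to serve as the ``$2$'') so that $\acb$ increments exactly by the current half-height. If a clean bijection proves elusive, my fallback is to introduce a generating function $G(x,u)$, where $u$ marks that auxiliary statistic. I would derive a linear functional equation for $G(x,u)$ and iterate it at $u=q^k$ to obtain the continued fraction \eqref{def-q-nara} directly.
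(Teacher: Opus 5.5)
Your first sentence matches the paper exactly: the paper quotes this identity from \cite[Theorem~1.1]{FTHZ2019} (pairs \#1 and \#9 of its Table~1) and gives no proof. The self-contained sketch, however, has a gap: Step~2 is never completed, and you abandon the natural decomposition $\sigma=\alpha\,n\,\beta$ because of a slip, not because it fails. Your bookkeeping for $\sigma=\alpha\,n\,\beta\in\fS_n(231)$ is correct: $\des(\sigma)=\des(\alpha)+\des(\beta)+[\beta\neq\varnothing]$ and $\acb(\sigma)=\acb(\alpha)+\acb(\beta)+|\beta|\,[\alpha\neq\varnothing]$. The slip is in the equation $C=1+xC(1-t+tC(qx))$, which gives $\beta$ the factor $q^{|\beta|}$ even when $\alpha=\varnothing$. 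That equation already fails at $n=2$, where it gives $1+tq$ instead of $1+t$. Splitting into the cases $\beta=\varnothing$; $\alpha=\varnothing\neq\beta$; and $\alpha,\beta\neq\varnothing$, the correct equation is
\[
C(x)=1+xC(x)+tx\bigl(C(x)-1\bigr)+tx\bigl(C(x)-1\bigr)\bigl(C(qx)-1\bigr)=1+xC(x)+tx\bigl(C(x)-1\bigr)C(qx).
\]
The tail of \eqref{def-q-nara} beginning at the partial numerator $qx$ is $C(t,q,qx)$, so the fraction satisfies $C(x)=1/\bigl(1-x/(1-txC(qx))\bigr)$. Clearing denominators gives exactly the displayed equation. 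That equation determines $[x^n]C$ from lower coefficients, so the $231$ case is finished. It is true that the map $\sigma\mapsto\rU\,\Theta(\alpha)\,\rD\,\Theta(\beta)$ does not carry the statistics to the path weights. But you need no bijection, no catalytic variable, and not even the Flajolet path reading: comparing functional equations suffices.

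Step~3 rests on a false structural claim. For $\sigma=\alpha\,n\,\beta\in\fS_n(213)$, the entries of $\alpha$ need not exceed those of $\beta$ (take $\sigma=132$); that separation characterizes $\fS_n(132)$ instead. What $213$-avoidance actually forces is this: $\alpha$ is increasing, and $\beta=u\gamma$, where $u$ consists of the entries in $(\max\alpha,n)$ and $\gamma$ of the entries below $\max\alpha$ (with $\max\varnothing:=0$). Deleting $n$ and $u$ leaves $\tau=\alpha\gamma\in\fS_{\max\alpha}(213)$. The map $\sigma\mapsto(\tau,\st(u))$ is a bijection: to invert it, reinsert $n$ right after the maximum of $\tau$, followed by the shifted $u$. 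One checks that
\[
\des(\sigma)=\des(\tau)+\des(u)+[u\neq\varnothing],\qquad \acb(\sigma)=\acb(\tau)+\acb(u)+|u|\,[\tau\neq\varnothing].
\]
This is exactly the $231$ bookkeeping, so it yields the same functional equation and finishes the $213$ case. Here $u$ is the last block $u_k$ of the left-to-right maxima decomposition used in the proof of Theorem~\ref{thm:213-231-q}.
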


Note that $\acb$ is usually referred to as a \emph{vincular pattern}; see \cite[Chapter 7.1]{Kit11} for further information. We let $\alpha(\sigma)$ denote the number of occurrences of arrow pattern $\alpha$ in $\sigma$, and introduce three generating functions:
\begin{align*}
	R^{213}(q,x) &:= 1+\sum_{n\ge 1}x^n\sum_{\sigma\in\fS_n(213)}q^{\alpha(\sigma)},\\
	R^{231}(q,x) &:= 1+\sum_{n\ge 1}x^n\sum_{\sigma\in\fS_n(231)}q^{\alpha(\sigma)},\\
	R^{321}(q,x) &:= 1+\sum_{n\ge 1}x^n\sum_{\sigma\in\fS_n(321)}q^{\alpha(\sigma)}.
\end{align*}
As evidenced by Table~\ref{tab:3-alpha all}, the enumeration of the three cases corresponding to $\pi=213$, $231$, and $321$ consistently results in the odd-indexed Fibonacci numbers. This equinumerosity could be succinctly rephrased as
\begin{align*}
R^{213}(0,x)=R^{231}(0,x)=R^{321}(0,x),
\end{align*}
wherein the first equality can be strengthened as follows.

\begin{theorem}\label{thm:213-231-q}
We have
\begin{align}\label{id:gf-213-231-q}
R^{213}(q,x)=R^{231}(q,x)=\frac{1}{1-xC(1,q,x)}.
\end{align}
In particular, for every $n\ge 1$, we have
\begin{align}\label{id:213-231-Fibo}
a_n(213,\alpha)=a_n(231,\alpha)=F_{2n-1}.
\end{align}
\end{theorem}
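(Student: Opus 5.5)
The plan is to turn $\alpha$-occurrences into a vincular statistic, factor each permutation into left-to-right-maxima blocks, and read off $\frac{1}{1-xC(1,q,x)}$ from that factorization.

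\emph{Step 1: what $\hat\sigma=\Phi^{-1}(\sigma)$ looks like in one-line terms.} Since $\Phi$ erases the parentheses of the standard cycle notation, the cycles of $\hat\sigma$ are the blocks of $\sigma$ cut just before each left-to-right maximum. Each block has the form $m\,w$ with $m\in\LRmax(\sigma)$ and every letter of $w$ smaller than $m$. Hence $\hat\sigma(\sigma_i)=\sigma_{i+1}$ unless $\sigma_{i+1}$ is a left-to-right maximum (or $i=n$). In that exceptional case $\hat\sigma(\sigma_i)$ is the maximum $m$ of the block containing $\sigma_i$.

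\emph{Step 2: compare $\alpha(\sigma)$ with $\acb(\sigma)$.} An occurrence of $\alpha$ is a pair $\sigma_i<\sigma_j$ with $i<j$ and $\sigma_j<\hat\sigma(\sigma_i)$. There are two kinds.
\begin{itemize}
\item If $\sigma_{i+1}$ is not a left-to-right maximum, the occurrence is exactly an occurrence $(i,j)$ of $\acb$ with $\sigma_{i+1}$ inside the block.
\item If $\sigma_{i+1}=m'$ is a left-to-right maximum, the condition becomes $\sigma_i<\sigma_j<m$, where $m<m'$ is the maximum of the block ending at $\sigma_i$. Such a pair is again an occurrence $(i,j)$ of $\acb$, but a restricted one.
\end{itemize}
Therefore $\alpha(\sigma)=\acb(\sigma)-E(\sigma)$. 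Here $E(\sigma)$ counts the $\acb$-occurrences $(i,j)$ whose middle letter $\sigma_{i+1}$ is a left-to-right maximum and whose $\sigma_j$ exceeds the maximum of $\sigma_i$'s block.

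\emph{Step 3: factor into blocks and identify the generating function.} I want to show that, for $\pi\in\{213,231\}$, the left-to-right-maxima factorization $\sigma=B_1B_2\cdots B_r$ behaves as follows.
\begin{itemize}
\item Each block $B_s=m_s w_s$ contributes the weight $x\,q^{\acb(\st(w_s))}$, with $\st(w_s)$ ranging freely over $\fS_{|w_s|}(\pi)$.
\item Occurrences of $\alpha$ never straddle two blocks in any other way.
\end{itemize}
For $231$-avoiders: a left-to-right maximum $m_s$ forces every later letter smaller than $m_s$ to be smaller than everything between, so the blocks are essentially layered. For $213$-avoiders the interaction between blocks is controlled by the position of $n$. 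In both cases I would verify the following.
\begin{itemize}
\item The block contents can be re-standardized independently, so the factorization gives a bijection between $\fS_n(\pi)$ and sequences of pairs (block maximum, $\pi$-avoiding word).
\item Every cross-block $\alpha$-occurrence is cancelled exactly by $E$, or is excluded by $\pi$-avoidance.
\item Within a block, the $\acb$-occurrences of $m_s w_s$ coincide with those of $w_s$, because $m_s$ is larger than everything in $w_s$ and so cannot serve as the ``$1$'' or ``$2$''.
\end{itemize}
Summing over compositions then gives $\sum_r\bigl(xC(1,q,x)\bigr)^r=\frac{1}{1-xC(1,q,x)}$, using \eqref{id:213-231-q} at $t=1$. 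If the naive left-to-right-maxima factorization does not decouple cleanly for $213$, my fallback is a first-return-type decomposition on the last letter. This would use Lemma~\ref{lem:char of alpha} with $\sigma=\eta'\ominus\cdots$ to derive the functional equation $R=1+xC\,R$ directly.

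\emph{Step 4: specialize $q=0$.} At $q=0$ the continued fraction \eqref{def-q-nara} truncates to $C(1,0,x)=1/(1-x)$. Then $\frac{1}{1-x/(1-x)}=\frac{1-x}{1-2x}$. This does not match \eqref{gf:oddFibo}, which signals that the block weight must be more subtle than $xC$ on a free block. So part of Step 3 is to check the $q=0$ case against $F(x)$ and adjust the decomposition until it does. The main obstacle is precisely Step 3: correctly bookkeeping the cross-block occurrences $E(\sigma)$ for each of $213$ and $231$, so that the generating function factors exactly as claimed.
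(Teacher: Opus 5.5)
Your Steps 1 and 2 are correct. However, Step 4 contains a computational slip, and that slip then misleads you about Step 3. At $q=0$ the continued fraction \eqref{def-q-nara} does not collapse after the first level. Its first two partial numerators are $x$ and $tx$ (the $k=1$ terms $q^{0}x$ and $tq^{0}x$), and the first one to vanish is the third, $qx$. Hence $C(1,0,x)=1/(1-\frac{x}{1-x})=\frac{1-x}{1-2x}$, not $\frac{1}{1-x}$. A quick check is $C_2(1,0)=2$, since both $12$ and $21$ avoid $213$ and trivially have no $\acb$-occurrence. The expression $\frac{1-x}{1-2x}$ you obtained is therefore $C(1,0,x)$ itself. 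One more step gives $\frac{1}{1-xC(1,0,x)}=\frac{1-2x}{1-3x+x^2}$, which is exactly \eqref{gf:oddFibo}. So there is no mismatch and nothing to ``adjust''. In any case the right-hand side $1/(1-xC(1,q,x))$ is fixed by the statement, so changing the decomposition could never have repaired a discrepancy at $q=0$.

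The substantive gap is Step 3: you only announce it, and its main claim is false for $213$. In a $213$-avoider nothing can sit between two consecutive left-to-right maxima $m_s,m_{s+1}$, since such an $x$ would make $m_s\,x\,m_{s+1}$ an occurrence of $213$. Likewise, an entry with value in $(m_{i-1},m_i)$ must come after every entry with value in $(m_{j-1},m_j)$ for $j>i$; otherwise $m_i$ and these two entries form a $213$. Thus $\sigma=m_1m_2\cdots m_k\,u_k\cdots u_2u_1$, and your left-to-right-maxima blocks are $(m_1),\dots,(m_{k-1}),(m_ku_k\cdots u_1)$. The first $k-1$ blocks are forced singletons, so their contents are not free $213$-avoiders, and $\sum_r(xC)^r$ cannot be read off that factorization.

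The missing idea is to split by values rather than by positions: attach to each $m_i$ the word $u_i$ whose letters are the values in $(m_{i-1},m_i)$. Then $m_1,\dots,m_{k-1}$ are fixed points of $\hat\sigma$, and $\hat\sigma$ sends each non-final tail letter to the next letter. The layered (skew-sum) shape of $u_k\cdots u_1$ confines every $\acb$-occurrence to a single $u_i$, so $\alpha(\sigma)=\sum_i\acb(u_i)$ with the $u_i$ independent $213$-avoiders. This is the paper's argument.

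Your fallback does not close the gap either. Lemma~\ref{lem:char of alpha} characterizes $\alpha$-avoiders, so it only speaks to $q=0$, whereas $R^{213}(q,x)$ ranges over all $213$-avoiders.

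For $231$ your plan is sound and agrees with the paper. The blocks are $m_iu_i$, with the letters of $u_i$ being the values in $(m_{i-1},m_i)$, and every letter after block $s$ exceeds $m_s$. Hence the cross-block $\acb$-occurrences are exactly those counted by your $E$, but this still has to be written out. The general identity $\alpha=\acb-E$ is correct but becomes unnecessary once the value-based decomposition is in place.
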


\begin{proof}
We begin with a decomposition for any given permutation $\sigma\in\fS_n$ with $\LRMax(\sigma)=\{m_1,\ldots,m_k\}$, such that $\sigma$ is either $213$-avoiding or $231$-avoiding. Each of these two cases confirms half of \eqref{id:gf-213-231-q}.
\begin{itemize}
	\item If $\sigma\in\fS_n(213)$, then it can be uniquely decomposed as
	$$\sigma=m_1m_2\cdots m_ku_k\cdots u_2u_1,$$
	such that for $1\le i\le k$, the union of the entries contained in the subword $u_i$ is precisely the interval $(m_{i-1},m_i)$ (with $m_0:=0$ as a convention), and their standardizations must all be $213$-avoiding as well; see the left diagram in Fig.~\ref{fig:decomp} for an illustration of the case $k=3$. This decomposition also makes the following relation evident:
	\begin{align}\label{id:stat-decomp}
	\alpha(\sigma)=\sum_{i=1}^{k}\acb(u_i).
	\end{align}
	Turning this identity into functional equation, we deduce that
	\begin{align*}
	R^{213}(q,x) &= 1+x\cdot \sum_{\tau \text{ avoids }213}q^{\acb(\tau)}x^{|\tau|}+\left(x\cdot \sum_{\tau \text{ avoids }213}q^{\acb(\tau)}x^{|\tau|}\right)^2+\cdots\\
	&=\frac{1}{1-x\sum_{n\ge 0}x^n\sum_{\tau\in\fS_n(213)}q^{\acb(\tau)}}\\
	&=\frac{1}{1-xC(1,q,x)},
	\end{align*}
	where we have applied \eqref{id:213-231-q} for the last step.
	\item If $\sigma\in\fS_n(231)$, then it can be uniquely decomposed as
	$$\sigma=m_1u_1m_2u_2\cdots m_ku_k,$$
	such that for $1\le i\le k$, the union of the entries contained in the subword $u_i$ is precisely the interval $(m_{i-1},m_i)$ (with $m_0:=0$ as a convention), and their standardizations must all be $231$-avoiding as well; see the right diagram in Fig.~\ref{fig:decomp} for an illustration of the case $k=3$. The same relation \eqref{id:stat-decomp} still holds in this case, which is combined with the $t=1$ case of \eqref{id:213-231-q} to prove that $R^{231}(q,x)=\frac{1}{1-xC(1,q,x)}$.
\end{itemize}

Next, to prove \eqref{id:213-231-Fibo}, it suffices to show that these three sequences share the common generating function \eqref{gf:oddFibo}. On the one hand, setting $q=0$ in \eqref{id:gf-213-231-q} produces
\begin{align}\label{id:213-231-q=0}
\sum_{n\ge 0}a_n(213,\alpha)x^n=\sum_{n\ge 0}a_n(231,\alpha)x^n=\frac{1}{1-xC(1,0,x)}.
\end{align} 
On the other hand, setting $t=1$ and $q=0$ in \eqref{def-q-nara} yields the terminated continued fraction
\begin{align*}
C(1,0,x)=\sum_{n=0}^{\infty} C_n(1,0)x^n=\cfrac{1}{1-\cfrac{x}{1-x}}=\frac{1-x}{1-2x}.
\end{align*}
Plugging this back to \eqref{id:213-231-q=0}, we deduce that
$$\sum_{n\ge 0}a_n(213,\alpha)x^n=\sum_{n\ge 0}a_n(231,\alpha)x^n=\dfrac{1}{1-x\dfrac{1-x}{1-2x}}=\frac{1-2x}{1-3x+x^2},$$
which indeed agrees with \eqref{gf:oddFibo}.
\end{proof}

\begin{figure}
\begin{center}
\begin{tabular}{ccc}
\begin{tikzpicture}[scale=1.1]
  \tikzset{
    grid/.style={ draw, step=1cm, black!100, thin },
    graycell/.style={ fill=gray!50, draw=none, minimum width=1cm, minimum height=1cm, anchor=south west }
  }

  \fill[graycell] (0,1) rectangle (1,3);
  \fill[graycell] (1,2) rectangle (2,3);
  \fill[graycell] (0,0) rectangle (4,1);
  \fill[graycell] (1,1) rectangle (3,2);
  \fill[graycell] (3,2) rectangle (5,3);
  \fill[graycell] (4,1) rectangle (5,2);
  
  \draw[grid] (0,0) grid (5,3);

  \draw[thick] (3.1, 1.1) rectangle (3.9, 1.9);
  \draw[thick] (2.1, 2.1) rectangle (2.9, 2.9);
  \draw[thick] (4.1, 0.1) rectangle (4.9, 0.9);

  \node[anchor=center] at (4.5, .5) {$u_1$};
  \node[anchor=center] at (3.5, 1.5) {$u_2$};
  \node[anchor=center] at (2.5, 2.5) {$u_3$};

  \filldraw[black] (0,1) circle (2.5pt);
  \node[anchor=west] at (-.6,1.2) {\footnotesize{$m_1$}};
  \filldraw[black] (1,2) circle (2.5pt);
  \node[anchor=west] at (.4,2.25) {\footnotesize{$m_2$}};
  \filldraw[black] (2,3) circle (2.5pt);
  \node[anchor=west] at (1.7,3.25) {\footnotesize{$m_3$}};
  \node at (2.5,-.5) {$\sigma=m_1m_2m_3u_3u_2u_1\in\fS_n(213)$};
\end{tikzpicture}

&

\ \ \ 

&

\begin{tikzpicture}[scale=1.1]
  \tikzset{
    grid/.style={ draw, step=1cm, black!100, thin },
    graycell/.style={ fill=gray!50, draw=none, minimum width=1cm, minimum height=1cm, anchor=south west }
  }

  \fill[graycell] (0,1) rectangle (1,3);
  \fill[graycell] (1,2) rectangle (2,3);
  \fill[graycell] (1,0) rectangle (3,1);
  \fill[graycell] (2,1) rectangle (3,2);
  
  \draw[grid] (0,0) grid (3,3);

  \draw[thick] (1.1, 1.1) rectangle (1.9, 1.9);
  \draw[thick] (2.1, 2.1) rectangle (2.9, 2.9);
  \draw[thick] (0.1, 0.1) rectangle (0.9, 0.9);

  \node[anchor=center] at (0.5, 0.5) {$u_1$};
  \node[anchor=center] at (1.5, 1.5) {$u_2$};
  \node[anchor=center] at (2.5, 2.5) {$u_3$};

  \filldraw[black] (0,1) circle (2.5pt);
  \node[anchor=west] at (-.6,1.2) {\footnotesize{$m_1$}};
  \filldraw[black] (1,2) circle (2.5pt);
  \node[anchor=west] at (.4,2.25) {\footnotesize{$m_2$}};
  \filldraw[black] (2,3) circle (2.5pt);
  \node[anchor=west] at (1.7,3.25) {\footnotesize{$m_3$}};
  \node at (1.5,-.5) {$\sigma=m_1u_1m_2u_2m_3u_3\in\fS_n(231)$};
\end{tikzpicture}
\end{tabular}
\caption{Two decompositions for a permutation $\sigma$ with $\LRMax(\sigma)=\{m_1,m_2,m_3\}$}\label{fig:decomp}
\end{center}
\end{figure}

The rest of this subsection is devoted to the final case with $\pi=321$.

\begin{theorem}\label{thm:321-alpha}
For $n\ge 2$, we have $a_n(321,\alpha)=F_{2n-1}$, thus Conjecture~\ref{conj:Archer-Laudone} (2) holds true.
\end{theorem}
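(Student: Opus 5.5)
We will use Lemma~\ref{lem:char of alpha} together with the $321$-avoiding condition to get a recurrence for $d_n:=a_n(321,\alpha)$. We then check that $d_n$ satisfies $d_n=3d_{n-1}-d_{n-2}$ for $n\ge 3$, with $d_1=1$ and $d_2=2$. Equivalently, its generating function should agree with \eqref{gf:oddFibo}.

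Take $\sigma\in\fS_n(321,\alpha)$. If $\sigma_n=n$, we are in case (1), and $\sigma_1\cdots\sigma_{n-1}$ is an arbitrary member of $\fS_{n-1}(321,\alpha)$; this contributes $d_{n-1}$. Otherwise we use decomposition \eqref{decomp}: the block $\eta'$, whose entries all exceed $k$, is inserted in place of $k$ inside $\tau$, and $k$ sits at the end. Avoiding $321$ forces strong restrictions. Every entry of $\tau_{i+2}\cdots\tau_k$ lies below any entry of $\eta'$, so a descent within $\eta$ followed by any later $\tau$-entry would give a $321$. Hence either $i+1=k$, so that $\tau=\tau_1\cdots\tau_{k-1}k$ and the suffix after $\eta'$ is just $k$, or $\eta$ is increasing.

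\emph{Subcase $i=k-1$.} Here $\sigma=\tau_1\cdots\tau_{k-1}\,\eta'\,k$. A $321$ could use entries of $\tau$ followed by $k$ only if $\tau_1\cdots\tau_{k-1}$ contains $21$; but no entry of $\tau$ exceeds $k$ except $k$ itself, which is last, so no such occurrence exists. A pattern with a descent in $\eta'$ followed by $k$ is a genuine $321$, so $\eta$ must be increasing unless $\eta'$ is handled separately. Thus both subcases reduce to $\eta=12\cdots(n-k)$ increasing, with $n-k\ge 1$.

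\emph{General case.} Now $\sigma=\tau_1\cdots\tau_i\,(k+1)(k+2)\cdots n\,\tau_{i+2}\cdots\tau_k\,k$. We need $\sigma$ to avoid $321$, given that $\tau\in\fS_k(\alpha)$ and $\tau$ avoids $321$. A $321$ in $\sigma$ either lies inside the $\tau$-part (where $k$ has been moved to the end), or uses one large entry followed by a descent among $\tau_{i+2}\cdots\tau_k k$. The latter is impossible when $\tau_{i+2}\cdots\tau_k$ avoids $21$ below $k$, since $k$ is the largest among them. The condition is therefore: the word $\rho:=\tau_1\cdots\tau_i\tau_{i+2}\cdots\tau_k k$ avoids $321$ and $\tau_{i+2}\cdots\tau_k$ is increasing. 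We also need the arrow condition, which Lemma~\ref{lem:char of alpha} encodes as $\tau\in\fS_k(\alpha)$.

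The main obstacle is rewriting these constraints as a clean count. The plan is to define $g_{k}$ as the number of pairs $(\tau,i)$ that are admissible in this sense, so that $d_n=d_{n-1}+\sum_{k=1}^{n-1}g_k$. A naive "$\tau\mapsto\tau$ with $k$ moved to the end" would relate $g_k$ to $d_k$ with some weighting by the position of $k$. I would first compute small values, $d_1,\dots,d_5=1,2,5,13,34$ and the corresponding $g_k$, and guess $g_k=F_{2k-2}$ (even-indexed Fibonacci numbers, so that $\sum_k g_k$ telescopes). I would then prove this guess by a parallel recursion on $\tau$, again via Lemma~\ref{lem:char of alpha} applied to $\tau$ and tracking the position of the maximum $k$. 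If that fails, the fallback is a generating-function system in two series: $D(x)$, and $G(x)$ counting $\alpha,321$-avoiders refined by "the suffix after the maximum is increasing." I would solve this system and verify that $D(x)=\frac{1-2x}{1-3x+x^2}$, as required by \eqref{gf:oddFibo}.
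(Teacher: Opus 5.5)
\textbf{Verdict: genuine gap.} Your structural analysis is essentially right, but the step that produces the count is never carried out, and the value you plan to prove is wrong.

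On the analysis: your opening dichotomy ``either $i+1=k$ or $\eta$ is increasing'' is wrong as stated, because $k$ always follows $\eta'$; you recover from this, and $\eta$ must be the identity in every case. Your recurrence $d_n=d_{n-1}+\sum_{k=1}^{n-1}g_k$ is valid.

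The gap is that you never determine $g_k$. Your target $g_k=F_{2k-2}$ is false. Already $g_1=1$ (it gives $23\cdots n\,1$), and $g_2=2$, $g_3=5$, $g_4=13$. So $g_k=F_{2k-1}$, and a ``parallel recursion'' aimed at $F_{2k-2}$ cannot succeed. Your worry about a ``weighting by the position of $k$'' is also unfounded. The index $i$ is determined by $\tau$ (it is one less than the position of $k$ in $\tau$), so $g_k$ simply counts admissible $\tau$.

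\textbf{The missing observation.} In any $321$-avoiding permutation, the entries to the right of the maximum are increasing. Conversely, if $\tau_{i+2}\cdots\tau_k$ is increasing, then $k$ cannot take part in a $321$ of $\tau$. Hence $\rho=\tau_1\cdots\tau_i\tau_{i+2}\cdots\tau_k k$ avoids $321$ if and only if $\tau$ does.

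So your admissible $\tau$ are exactly the elements of $\fS_k(321,\alpha)$, i.e.\ $g_k=d_k$. This gives $d_n=d_{n-1}+\sum_{k=1}^{n-1}d_k$ for $n\ge 2$. Subtracting the same identity at $n-1$ gives $d_n=3d_{n-1}-d_{n-2}$ for $n\ge 3$, which with $d_1=1$, $d_2=2$ yields $F_{2n-1}$. Your ``fallback'' series $G(x)$ is therefore just $D(x)$: the refinement by ``increasing suffix after the maximum'' is vacuous on $321$-avoiders.

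\textbf{Comparison with the paper.} Completed this way, your route is shorter than the paper's. The paper keeps the split $\tau=\tau^{(1)}k\tau^{(2)}$ and separates $\tau^{(2)}=\varnothing$ from $\tau^{(2)}\neq\varnothing$. In the latter case it fixes $j=\tau_{i+2}$. It then proves two claims, using left-to-right maxima and the cycle structure of $\Phi^{-1}(\sigma)$:
\begin{enumerate}
\item the entries in $[j+1,k-1]$ outside $\tau^{(2)}$ are left-to-right maxima of $\tau^{(1)}$, so these values are freely distributed in increasing order between $\tau^{(1)}$ and $\tau^{(2)}$;
\item $\tau_1\cdots\tau_{j-1}\in\fS_{j-1}(321,\alpha)$.
\end{enumerate}
This yields $h_n=h_{n-1}+\sum_k h_{k-1}+\sum_k\sum_j 2^{k-1-j}h_{j-1}$, which is then solved through a functional equation. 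In effect, the paper recounts $\fS_k(321,\alpha)$ from scratch rather than recognizing it inside the decomposition.
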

\begin{proof}
Let us denote $h_n:=a_n(321,\alpha)$ with initial values $h_0=h_1=1$, and define $H(x):=\sum_{n\ge 0}h_nx^n$. 
We consider two cases relying on the characterization of $\alpha$-avoiding permutations given by Lemma~\ref{lem:char of alpha}, and then derive a recurrence relation that is satisfied by $h_n$.

Take any permutation $\sigma\in\fS_n(321,\alpha)$. The first case with $\sigma_n=n$ is clear since we have $\sigma_1\cdots\sigma_{n-1}\in\fS_{n-1}(321,\alpha)$, and the contribution from this case is $h_{n-1}$. The second case with $\sigma_n=k<n$ is more intricate to analyze. We begin with the familiar decomposition for some $0\le i<k\le n-1$:
\begin{align*}
\sigma &= \tau_1\cdots\tau_i \eta_1\cdots\eta_{n-k}\tau_{i+2}\cdots\tau_k\, k,
\end{align*}

where $\eta_1\cdots\eta_{n-k}=(k+1)\cdots n$ to avoid $321$. The $321$-avoidance also forces $\tau^{(2)}:=\tau_{i+2}\cdots\tau_k$ to be monotonically increasing. Further analysis is needed for $\tau^{(1)}:=\tau_1\cdots\tau_i$. Recall that Lemma~\ref{lem:char of alpha}~(2) requires that $\tau^{(1)}k\tau^{(2)}\in\fS_k(\alpha)$. There are two subcases.
\begin{enumerate}
	\item $\tau^{(2)}=\varnothing$. In this case $\tau^{(1)}$ can be any permutation in $\fS_{k-1}(321,\alpha)$, and conversely, any permutation taken from $\fS_{k-1}(321,\alpha)$ corresponds to a unique permutation in $\fS_n(321,\alpha)$ when appended by the sequence $(k+1)\cdots nk$. For a fixed $k$, this yields a contribution of $h_{k-1}$.
	\item $i\le k-2$ and $\tau^{(2)}\neq\varnothing$. Let us fix the value of $\tau_{i+2}=:j$. Recall that $j=\tau_{i+2}<\tau_{i+3}<\cdots<\tau_k<k$. We claim that $\tau^{(1)}$, consisting of integers in $[k-1]\setminus\{\tau_{i+2},\ldots,\tau_k\}$, satisfies the following two conditions.
	\begin{description}
		\item[(i)] Every integer $x\in [j,k-1]\setminus\{\tau_{i+2},\ldots,\tau_k\}$ is a left-to-right maximum in $\tau^{(1)}$.
		\item[(ii)] The prefix $\tau_1\cdots\tau_{j-1}$ of $\tau^{(1)}$ is a permutation of $[j-1]$ that avoids both $321$ and $\alpha$.
	\end{description}
	Conversely, given any permutation $\tau^{(3)}=\tau_1\cdots\tau_{j-1}\in\fS_{j-1}(321,\alpha)$, we can recover a permutation in case (2) with $\tau_{i+2}=j$ by placing each integer $x\in [j+1,k-1]$ into either $\tau^{(1)}$ (between $\tau^{(3)}$ and $\eta_1\cdots\eta_{n-k}$) or $\tau^{(2)}$. To do this, they must be placed such that they form monotonically increasing subsequences in both $\tau^{(1)}$ and $\tau^{(2)}$. Consequently, the total contribution from this case (2) for fixed $k$ and $j$ is $2^{k-1-j}\cdot h_{j-1}$. In summary of all the cases, we get the following recurrence relation for $n\ge 2$:
	\begin{align*}
	h_n &= h_{n-1}+\sum_{k=1}^{n-1}h_{k-1}+\sum_{k=2}^{n-1}\sum_{j=1}^{k-1}2^{k-1-j}h_{j-1} \\
	&= \sum_{i=0}^{n-1}h_i+\sum_{k=0}^{n-3}\sum_{j=0}^{k}2^{k-j}h_j,
	\end{align*}
	It is then routine to deduce the functional equation for $H(x)$:
	\begin{align*}
	H(x)-x-1 &= x\left(\frac{H(x)}{1-x}-1\right)+x^3\frac{1}{1-x}\cdot \frac{H(x)}{1-2x}.
	\end{align*}
	Solving for $H(x)$ we conclude that $H(x)=\frac{1-2x}{1-3x+x^2}=F(x)$, as desired. 

	To finish the proof, it remains to prove claims~(i) and (ii). To see (i), we suppose on the contrary that there exists an integer $x\in [j,k-1]\setminus\{\tau_{i+2},\ldots,\tau_k\}$ that is not a left-to-right maximum, hence there is another integer $y>x$ that is to the left of $x$ in $\tau^{(1)}$, but then the triple $(y,x,j)$ witnesses a $321$ pattern in $\sigma$, leading to a contradiction. 

	Next, suppose $[j,k-1]\setminus\{\tau_{i+2},\ldots,\tau_k\}=\{x_1,\ldots,x_{i-j+1}\}_<$, then claim (ii) is equivalent to saying that $x_1x_2\cdots x_{i-j+1}(k+1)$ is a factor\footnote{A factor of a word is a contiguous block of letters.} of $\sigma$. Suppose on the contrary that there is a nonempty factor $\nu$ sitting between $x_{\ell}$ and $x_{\ell+1}$, for some $1\le \ell\le i-j+1$ (set $x_{i-j+2}=k+1$ as a convention). Note that $\nu$ is composed of integers from $[j-1]$, so in particular they are all smaller than $j$. Denoting the rightmost letter of $\nu$ as $y$, we note that in the cycle notation of the preimage $\Phi^{-1}(\sigma)$, the cycle that begins with $x_{\ell}$ must ends with $y$, rendering the triple $(x_{\ell},y,j)$ an occurrence of the arrow pattern $\alpha$ in $\sigma$. This is a contradiction so we have both claims verified and the proof is now complete.
\end{enumerate}
\end{proof}
	
Our proof of Theorem~\ref{thm:321-alpha} implies a refinement by a statistic that we introduce next.
	
\begin{definition}\label{def:q-Fibo}
	For a permutation $\sigma\in\fS_n$, we denote by $\tail(\sigma)$ the smallest integer $j$, $0\le j\le n-1$, such that $\sigma_{n-j}\neq n-j$. When no such $j$ exists, i.e., $\sigma=12\cdots n$ is the identity permutation, we set $\tail(\sigma)=n$. Said in another way, $\tail(\sigma)$ counts the number of ending fixed points of $\sigma$.
\end{definition}
	
	Let us introduce the generating function 
	$$f_n(q):=\sum_{\sigma\in \fS_n(321,\alpha)}q^{\tail(\sigma)}.$$

	The first few of these polynomials are given by $f_0(q):=1$, $f_1(q)=q$, $f_2(q)=q^2+1$, $f_3(q)=q^3+q+3$, etc. We have the following $q$-extension of Conjecture~\ref{conj:Archer-Laudone}~(2).
	\begin{corollary}
		For $n\ge 1$, we have:
		\begin{align}\label{eq:q-odd-Fibo}
			f_n(q) &= q^{n}+\sum_{k=1}^{n-1}F_{2k}q^{n-k-1}.
		\end{align}
	\end{corollary}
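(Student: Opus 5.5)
The plan is to refine the case analysis in the proof of Theorem~\ref{thm:321-alpha} by tracking $\tail$. The first step is a decomposition. Every $\sigma\in\fS_n(321,\alpha)$ that is not the identity can be written uniquely as $\sigma=\rho\oplus 12\cdots m$, where $m=\tail(\sigma)$ and $\rho\in\fS_{n-m}(321,\alpha)$ has $\tail(\rho)=0$. Case (1) of Lemma~\ref{lem:char of alpha} strips off the final fixed point. Moreover, appending a fixed point preserves both $321$-avoidance and $\alpha$-avoidance, since $\Phi^{-1}$ just adds the singleton cycle $(n)$. Writing $g_r:=|\{\rho\in\fS_r(321,\alpha):\tail(\rho)=0\}|$, we therefore get
\[
f_n(q)=q^n+\sum_{r=2}^{n}g_r\,q^{n-r}.
\]
Comparing this with \eqref{eq:q-odd-Fibo} via $r=k+1$, it suffices to show that $g_r=F_{2r-2}$ for all $r\ge 2$.

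Next, $g_r$ is exactly the count of case (2) in the proof of Theorem~\ref{thm:321-alpha}, namely the permutations with $\sigma_r\ne r$. That is,
\[
g_r=h_r-h_{r-1}=F_{2r-1}-F_{2r-3}=F_{2r-2},
\]
using Theorem~\ref{thm:321-alpha} together with $h_1=1=F_1$ to cover $r=2$. This finishes the proof. Equivalently, one can sum the recurrence contributions directly:
\[
g_r=\sum_{k=1}^{r-1}h_{k-1}+\sum_{k=2}^{r-1}\sum_{j=1}^{k-1}2^{k-1-j}h_{j-1},
\]
and check this against $F_{2r-2}$ with generating functions. This direct route is unnecessary given the subtraction above.

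The only point that needs care is the claim that the decomposition is a bijection onto pairs $(\rho,m)$ with $\tail(\rho)=0$. We need that $\rho\oplus 12\cdots m$ avoids $\alpha$ whenever $\rho$ does. An $\alpha$-occurrence $(x_1,x_2,x_3)$ requires $\hat\sigma(x_1)=x_3$ with $x_1<x_2<x_3$. The new entries are fixed points of $\hat\sigma$, so none of them can serve as $x_1$, and none can be $x_3$ for an $x_1\neq x_3$. They cannot serve as $x_2$ either, because $x_2$ would then lie after $x_1$ in one-line notation while $x_2<x_3\le n-m$. All the other conditions are clear. I expect the only mild obstacle to be the index bookkeeping at small $n$ (e.g.\ $n=1,2$), which I will check against the listed values $f_2=q^2+1$ and $f_3=q^3+q+3$.
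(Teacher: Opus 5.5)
Your proof is correct and follows essentially the same route as the paper. Your decomposition $\sigma=\rho\oplus 12\cdots m$ is the unrolled form of the paper's recurrence $f_n(q)=qf_{n-1}(q)+f_n(0)$, and you obtain $g_r=f_r(0)=h_r-h_{r-1}=F_{2r-2}$ by the same subtraction via Theorem~\ref{thm:321-alpha}. Your separate check that appending fixed points preserves $\alpha$-avoidance is already the ``if'' direction of Lemma~\ref{lem:char of alpha}~(1), so you could simply cite it.
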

	\begin{proof}
	For a permutation $\sigma\in\fS_n(321,\alpha)$, the two cases of the ending element $\sigma_n$ clearly yields the following recurrence
	\begin{align}\label{rec:fq-1}
		f_n(q) &= qf_{n-1}(q)+f_n(0),
	\end{align}
	which can be iterated to give us
	\begin{align}
		f_n(q) &= \sum_{i=0}^nq^{i}f_{n-i}(0)=f_0(0)q^n+f_1(0)q^{n-1}+\sum_{i=0}^{n-2}q^{i}f_{n-i}(0)\nonumber \\
		&= q^n+\sum_{i=1}^{n-1}q^{n-1-i}f_{i+1}(0).\label{rec:fq-2}
	\end{align}
	On the other hand, plugging $q=1$ in \eqref{rec:fq-1} and applying Theorem~\ref{thm:321-alpha}, we deduce that 
	$$f_n(0)=f_n(1)-f_{n-1}(1)=F_{2n-1}-F_{2n-3}=F_{2n-2}.$$
	Plugging this back to \eqref{rec:fq-2}, we arrive at \eqref{eq:q-odd-Fibo}.
	\end{proof}
	\begin{remark}
	Setting $q=1$ in \eqref{eq:q-odd-Fibo} and applying Theorem~\ref{thm:321-alpha} for the left-hand side, we obtain a known identity for the Fibonacci numbers. For $n\ge 2$, we have that
		\begin{align*}
			F_2+F_4+\cdots+F_{2n-2} &= F_{2n-1}-1.
		\end{align*}
	See~\cite[Id.~12]{BQ2003} for an interesting combinatorial approach via tilings.
	\end{remark}

\section{Two proofs of Conjecture~\ref{conj:Archer-Laudone}~(3)}\label{sec:beta}
We provide two proofs for the remaining item (3) of Conjecture~\ref{conj:Archer-Laudone}, which involves a different arrow pattern $\beta:=(12;1\to 2)$. 

\subsection{A direct bijective proof}
We first characterize in a more explicit way the permutations that avoid both $321$ and $\beta$, then we build a bijection $\psi$ from $\cM_n$ to $\fS_n(321,\beta)$. Here $\cM_n$ denotes the set of Motzkin paths of length $n$, whose definition we recall next.
\begin{definition}\label{def:M-path}
	For $n\ge 0$, a \emph{Motzkin path} of length $n$ is a lattice path from $(0,0)$ to $(n,0)$ that never falls below the $x$-axis, consisting of \emph{up ($\nearrow$)} steps $\rU=(1,1)$, \emph{down ($\searrow$)} steps $\rD=(1,-1)$, and \emph{level ($\rightarrow$)} steps $\rL=(1,0)$. If a Motzkin path contains no level steps, we also call it a \emph{Dyck path} of semilength $n/2$.
\end{definition}

In what follows, Motzkin paths are usually expressed as a word composed of letters $\rU$, $\rD$, and $\rL$. For example, the word $w=\rU\rU\rD\rL\rD\rL\rU\rD$ represents the following Motzkin path of length $8$:
\begin{align*}
		\vcenter{\hbox{\begin{tikzpicture}[line width=0.8pt,scale=0.7]
					\coordinate (O) at (0,0);
					\draw[thick] (O)--++(1,1)--++(1,1)--++(1,-1)--++(1,0)--++(1,-1)--++(1,0)--++(1,1)--++(1,-1);
					\filldraw (O) circle(0.5ex) ++(1,1)circle(0.5ex) ++(1,1)circle(0.5ex) ++(1,-1)circle(0.5ex)
					++(1,0)circle(0.5ex) ++(1,-1)circle(0.5ex) ++(1,0)circle(0.5ex) ++(1,1)circle(0.5ex) ++(1,-1)circle(0.5ex);
					\path (0.3,0.7) node {$\rU$} ++(1,1) node {$\rU$} ++(1.4,0) node {$\rD$} ++(0.8,-0.4) node {$\rL$} ++(1.2,-0.6) node {$\rD$} ++(0.8,-0.4) node {$\rL$} ++(0.8,0.4) node {$\rU$} ++(1.4,0) node {$\rD$} ++(0.8,-0.6) node {.};
		\end{tikzpicture}}}
\end{align*}

Recall that a permutation $\sigma$ is called an \emph{involution} if and only if $\sigma^{-1}=\sigma$. Equivalently, $\sigma$ is an involution if and only if there are only $1$-cycles (fixed points) and $2$-cycles (transpositions) in its cycle notation. We denote by $\rI_n$ the set of involutions of length $n$. An involution is said to be \emph{non-nesting} if it avoids the classical pattern $4321$. The introduction of non-nesting involutions is motivated by the following alternative characterization of $\fS_n(321,\beta)$.

\begin{proposition}\label{prop:char 321-beta}
A permutation $\sigma$ avoids simultaneously the classical pattern $321$ and the arrow pattern $\beta$, if and only if the preimage $\hat{\sigma}:=\Phi^{-1}(\sigma)$ is a non-nesting involution.
\end{proposition}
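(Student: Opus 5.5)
The plan is to first unwind the arrow-pattern condition for $\beta=(12;1\to 2)$ directly in terms of the standard cycle notation of $\hat{\sigma}=\Phi^{-1}(\sigma)$. By the definition of $\Phi$, $\sigma$ is obtained by writing the cycles of $\hat\sigma$ in standard form, each beginning with its maximum and with the maxima increasing, and then erasing the parentheses. So for any entry $x$, the value $\hat\sigma(x)$ is one of two things:
\begin{enumerate}
\item the letter immediately to the right of $x$ in $\sigma$, if $x$ is not the last letter of its cycle;
\item the first letter of $x$'s cycle, which is its maximum and lies to the left of $x$, if $x$ is the last letter.
\end{enumerate}
An occurrence of $\beta$ needs $x_1<x_2$, with $x_1$ to the left of $x_2$ in $\sigma$ and $\hat\sigma(x_1)=x_2$. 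Case (2) never produces one. Case (1) produces one exactly when two consecutive letters inside one cycle form an ascent. Hence $\sigma$ avoids $\beta$ if and only if every cycle of $\hat\sigma$, written from its maximum, is decreasing.

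Next I bring in $321$-avoidance. If $\sigma$ avoids both patterns, a cycle of length at least $3$ would be a decreasing factor of length $3$ in $\sigma$, which is a $321$. So all cycles have length at most $2$, i.e.\ $\hat\sigma\in\rI_n$. In that case $\sigma$ is a concatenation of blocks $(f)$ for fixed points and $(b\,a)$ with $b>a$ for transpositions, and the block maxima increase from left to right. Conversely, for an involution the $\beta$-avoidance is automatic, since every $2$-cycle $(b\,a)$ is decreasing. It therefore remains to show the following: for an involution $\hat\sigma$, the word $\sigma$ contains $321$ if and only if $\hat\sigma$ has a nesting, meaning two transpositions $(d\,c)$ and $(b\,a)$ with $a<c<d<b$. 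This is exactly an occurrence of $4321$ in $\hat\sigma$.

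For the nesting-implies-$321$ direction, take nested arcs $a<c<d<b$. Since $d<b$, the block $d\,c$ precedes the block $b\,a$ in $\sigma$, and then $d,c,a$ is a $321$.

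For the converse, which I expect to be the main (though mild) point, let $z>y>x$ appear in this order in $\sigma$. Because block maxima increase, $y$ cannot be the maximum of any block at or after $z$'s block. So $y$ is the smaller entry of some $2$-block $(d\,y)$ with $d\ge z$. The entry $x$ must lie in a strictly later block, since the block of $y$ has no further letters. By the same reasoning $x$ is the smaller entry of a block $(b\,x)$ with $b>d$. This gives $x<y<d<b$, a nesting. Along the way I need to check that fixed points lying under an arc do not create a $321$. This holds because a fixed point is always a block maximum, so it can serve only as the ``$3$'' of a pattern, and that case is already covered above. Together these steps give the equivalence in Proposition~\ref{prop:char 321-beta}.
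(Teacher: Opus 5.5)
Your proof is correct and follows the paper's route. You first reduce $\beta$-avoidance to each standard cycle of $\hat{\sigma}$ being decreasing, then show that $321$-avoidance forces cycles of length at most two with no nested transpositions; you add the block-by-block verification that the paper only asserts. The one identification you state without argument, as the paper also does, is that a $4321$ in an involution forces two nested transpositions; this follows because a decreasing subsequence contains at most one fixed point, so two of its four entries lie on the same side of the diagonal, and the corresponding two transpositions are then nested.
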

\begin{proof}
Suppose $\hat{\sigma}$ is written in its standard cycle notation. By the definition of arrow pattern, $\sigma$ avoids the arrow pattern $\beta=(12;1\to 2)$, if and only if inside each cycle of $\hat{\sigma}$ the elements are monotonically decreasing. Assume that is the case, then $\sigma$ further avoids $321$ if and only if 
\begin{enumerate}[1)]
\item each cycle of $\hat{\sigma}$ has at most two elements;
\item denoting the $2$-cycles of $\hat{\sigma}$ as $(d_1\,u_1),(d_2\,u_2),\ldots,(d_m\,u_m)$, with $d_1<d_2<\cdots<d_m$ and $d_i>u_i$ for all $1\le i\le m$, then we have $u_1<u_2<\cdots<u_m$.
\end{enumerate}
Note that condition 1) amounts to saying that $\hat{\sigma}$ is an involution, while condition 2) is equivalent to requiring that the involution $\hat{\sigma}$ is non-nesting.
\end{proof}

It is known that $\rI_n(4321)$, i.e., the set of $4321$-avoiding involutions is enumerated by the $n$-th Motzkin number $M_n$; see for instance \cite{Gui1995,Jag2003}. We supply here a proof of this result relying on a bijection that is essentially Biane's bijection over $\fS_n$ given in \cite{Bia1993} restricted to $\rI_n(4321)$. Our description of this bijection is adapted from \cite[Theorem~3]{BBS2011}.

\begin{theorem}\label{thm:Motzkin and nonnesting involution}
There exists a bijection $\Theta:\cM_n\to \rI_n(4321)$ such that
\begin{align}\label{stat-Theta}
	\nou(w) &= \cc(\Theta(w)),
\end{align}
for any Motzkin path $w\in\cM_n$. Here $\nou(w)$ is the number of $\rU$'s contained in $w$, and $\cc(\sigma)$ denotes the number of $2$-cycles in the cycle notation of a permutation $\sigma$.
\end{theorem}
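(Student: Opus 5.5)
We construct $\Theta$ explicitly by reading the Motzkin path $w=w_1w_2\cdots w_n$ from left to right and letting step $i$ decide what position $i$ is in the involution. The rule is: an $\rL$ step at position $i$ makes $i$ a fixed point; an $\rU$ step at position $i$ makes $i$ the smaller element $u$ of some $2$-cycle; a $\rD$ step at position $i$ makes $i$ the larger element $d$ of some $2$-cycle. Then $\nou(w)$ is the number of openers, which equals the number of $2$-cycles, so \eqref{stat-Theta} holds automatically once $\Theta$ is well defined. The remaining question is how to match openers with closers. Following the restriction of Biane's bijection in \cite[Theorem~3]{BBS2011}, the $\rD$ steps carry no extra labels here, so the matching is forced: the $t$-th $\rD$ step from the left is paired with the $t$-th $\rU$ step from the left (first-in-first-out). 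Writing $u_1<\cdots<u_m$ for the positions of the $\rU$'s and $d_1<\cdots<d_m$ for those of the $\rD$'s, we set
\[
\Theta(w)=\prod_{t=1}^m (u_t\,d_t).
\]

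The next step is to check that $\Theta(w)\in\rI_n(4321)$. Because $w$ never goes below the axis, the $t$-th $\rD$ comes after the $t$-th $\rU$, so $u_t<d_t$ and each pair is a genuine transposition. The result is then an involution whose openers and closers are both increasing in the same order. This is exactly condition 2) in the proof of Proposition~\ref{prop:char 321-beta}: no two arcs nest. We then show that this non-nesting condition is equivalent to avoiding $4321$ for involutions. If $u_s<u_t<d_t<d_s$, the entries at positions $u_s,u_t,d_t,d_s$ are $d_s,d_t,u_t,u_s$, which form a $4321$. Conversely, take a $4321$ occurrence $a<b<c<e$ with $\sigma_a>\sigma_b>\sigma_c>\sigma_e$ in an involution. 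We argue that some pair of its arcs must nest, by a short case analysis on which of the four positions are fixed points or arc endpoints. This case analysis is the only place needing care. I expect it to be the main obstacle, though it is routine: a fixed point sitting inside a decreasing run forces the arcs through the neighbouring entries to straddle it, and this produces a nesting.

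For the inverse map, given $\sigma\in\rI_n(4321)$ we write step $i$ as $\rL$ if $\sigma_i=i$, as $\rU$ if $\sigma_i>i$, and as $\rD$ if $\sigma_i<i$. Every prefix contains at least as many $\rU$'s as $\rD$'s, since each closer $d$ is preceded by its own opener $\sigma_d<d$. Hence this word is a Motzkin path. Because $\sigma$ is non-nesting, its arcs pair the $t$-th opener with the $t$-th closer, so applying $\Theta$ to this path recovers $\sigma$. In the other direction, applying the inverse map to $\Theta(w)$ clearly returns $w$. Therefore $\Theta$ is a bijection, and the proof is complete.
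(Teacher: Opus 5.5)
Your construction is exactly the paper's. You pair the $t$-th $\rU$ with the $t$-th $\rD$, send $\rL$ steps to fixed points, and invert by reading openers, closers and fixed points back off the involution. The paper likewise takes for granted, through its definition of ``non-nesting'' and Proposition~\ref{prop:char 321-beta}, that arc non-nesting is equivalent to $4321$-avoidance for involutions.

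The direction you left as a sketch needs no case analysis. In a non-nesting involution, two openers $p<q$ with $\sigma_p>\sigma_q$ would nest, since $p<q<\sigma_q<\sigma_p$. Two such closers would also nest, since $\sigma_q<\sigma_p<p<q$. Two fixed points always form an increasing pair. So a decreasing subsequence contains at most one opener, one closer and one fixed point, hence has length at most $3$.
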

\begin{proof}
For a given Motzkin path $w=w_1\cdots w_n\in\cM_n$, where $w_i\in\{\rU,\rD,\rL\}$, we scan it from left to right, getting three subsets that record the positions of $\rU$'s, $\rD$'s, and $\rL$'s, respectively:
\begin{align*}
	\cU(w) &= \{u_1,u_2,\ldots,u_m\},\\
	\cD(w) &= \{d_1,d_2,\ldots,d_m\},\\
	\cL(w) &= \{l_1,l_2,\ldots,l_{n-2m}\}.
\end{align*}
Next, we pair $\cU(w)$ with $\cD(w)$ to get $2$-cycles, namely $(d_1\,u_1),(d_2\,u_2),\ldots,(d_m\,u_m)$, while each element in $\cL(w)$ forms a $1$-cycle by itself. Notice that $w$ being a Motzkin path (never going below $x$-axis) ensures that $d_i>u_i$ for every $1\le i\le m$. Therefore, lining up all these $2$-cycles and $1$-cycles increasingly (with respect to the leading element of the cycle) from left to right, we arrive at the standard cycle notation of a certain permutation, which we set as the image $\Theta(w)$. It is easy to see from our construction that $\Theta(w)$ satisfies conditions 1) and 2) in Proposition~\ref{prop:char 321-beta}, i.e., it is a non-nesting involution thus $\Theta$ is well-defined. 

The relation \eqref{stat-Theta} between statistics $\nou$ and $\cc$ is evident. Moreover, it is clear how to construct the inverse mapping $\Theta^{-1}:\rI(4321)\to \cM_n$. Namely, given a non-nesting involution $\sigma$ written in its standard cycle notation, we collect repectively the smaller element from each of the $2$-cycles as a subset $\cU(\sigma)$, the larger element from each $2$-cycle as another subset $\cD(\sigma)$, and the $1$-cycles as a third subset $\cL(\sigma)$. Then, viewing $\cU(\sigma)$, $\cD(\sigma)$, $\cL(\sigma)$ as the set of positions for up steps, down steps, and level steps, respectively, we get the preimage path $\Theta^{-1}(\sigma)$.
\end{proof}
Take the previous path $w=\rU\rU\rD\rL\rD\rL\rU\rD$ for an example, we see that $\cU(w)=\{1,2,7\}$, $\cD(w)=\{3,5,8\}$, and $\cL(w)=\{4,6\}$. Hence we get $\Theta(w)=(3\,1)(4)(5\,2)(6)(8\,7)$, which is indeed a non-nesting involution of length $8$.

Now we are in a position to give the first proof of Conjecture~\ref{conj:Archer-Laudone}~(3).
\begin{theorem}\label{thm:beta-Biane}
There exists a bijection $\psi: \cM_n \to \fS_n(321,\beta)$, such that
\begin{align}\label{stat-psi}
	\nou(w) &= \des(\psi(w))
\end{align}
for every $w \in\cM_n$. In particular, Conjecture~\ref{conj:Archer-Laudone}~(3) holds true.
\end{theorem}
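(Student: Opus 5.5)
The plan is to take $\psi:=\Phi\circ\Theta$, where $\Theta:\cM_n\to\rI_n(4321)$ is the bijection of Theorem~\ref{thm:Motzkin and nonnesting involution}. By Proposition~\ref{prop:char 321-beta}, $\Phi$ restricts to a bijection from $\rI_n(4321)$ onto $\fS_n(321,\beta)$: a permutation $\sigma$ lies in $\fS_n(321,\beta)$ exactly when $\Phi^{-1}(\sigma)$ is a non-nesting involution, and $\Phi$ is a bijection on $\fS_n$. Hence $\psi$ is a bijection $\cM_n\to\fS_n(321,\beta)$, and $a_n(321,\beta)=|\cM_n|=M_n$, which settles Conjecture~\ref{conj:Archer-Laudone}~(3). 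What remains is the statistic identity \eqref{stat-psi}. Since $\nou(w)=\cc(\Theta(w))$ by \eqref{stat-Theta}, it suffices to prove that for every non-nesting involution $\hat\sigma$ we have $\des(\Phi(\hat\sigma))=\cc(\hat\sigma)$.

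To prove this, write $\hat\sigma$ in standard cycle notation. Each cycle is either a fixed point $(a)$ or a $2$-cycle $(d\,u)$ with $d>u$, and the cycles are ordered increasingly by their leading (maximal) elements. Erasing parentheses gives $\sigma=\Phi(\hat\sigma)$, a word built from blocks "$a$" or "$d\,u$". Each $2$-cycle contributes one descent inside its block, namely $d>u$. I will then show there are no descents between consecutive blocks. The first letter of the next block is its maximal element, which is larger than the maximal element of the current block, hence larger than the last letter of the current block. So the only descents are the internal ones, one per $2$-cycle, giving $\des(\sigma)=\cc(\hat\sigma)$. This step does not even use non-nesting; that property is needed only for the bijectivity onto $\fS_n(321,\beta)$.

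I do not expect a real obstacle. The only care needed is checking that the block boundaries are ascents, which follows directly from the standard-form convention that cycles begin with their maxima and are listed increasingly by these maxima. As a sanity check, take the running example $w=\rU\rU\rD\rL\rD\rL\rU\rD$. Then $\Theta(w)=(3\,1)(4)(5\,2)(6)(8\,7)$, so $\psi(w)=31452687$, which has descents $31$, $52$, $87$, three in total, matching $\nou(w)=3$.
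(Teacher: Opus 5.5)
Your proposal is correct and follows the paper's proof: you take $\psi=\Phi\circ\Theta$, derive bijectivity from Proposition~\ref{prop:char 321-beta} and Theorem~\ref{thm:Motzkin and nonnesting involution}, and obtain the descent identity through the $2$-cycles. Your observation that block boundaries in the standard cycle notation are always ascents supplies the justification for the paper's one-line claim that the descents of $\sigma$ are exactly the $2$-cycles of $\Phi^{-1}(\sigma)$.
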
	

\begin{proof}
It suffices to define $\psi$ as the composition map $\psi:=\Phi\circ\Theta:\cM_n\to \fS_n(321,\beta)$; see the commutative diagram in Fig.~\ref{triangle}. In view of Proposition~\ref{prop:char 321-beta} and Theorem~\ref{thm:Motzkin and nonnesting involution}, the mapping $\psi$ is the composition of two bijections thus is itself a bijection from $\cM_n$ to $\fS_n(321,\beta)$ for every $n\ge 1$. Furthermore, we see that $\sigma_i$ and $\sigma_{i+1}$ forms a descent pair in $\sigma$, if and only if they form a $2$-cycle in $\Phi^{-1}(\sigma)$. Combining this with \eqref{stat-Theta} we can deduce \eqref{stat-psi}, thereby completing the proof.
\end{proof}

\begin{figure}[h!]
\begin{tikzpicture}[scale=0.7]
\draw(0,0) node{$\cM_{n}$};
\draw(6,0) node{$\fS_{n}(321,\beta)$};
\draw(3,4.5) node{$\rI_n(4321)$};

\draw(.8,2.2) node{$\Theta$};
\draw(5,2.2) node{$\Phi$};
\draw(3,0.4) node{$\psi$};

\draw[->] (3,1.5) arc (270 : 0 : 0.4cm);

\draw[-latex] (0,0.5)->(3,4.2);
\draw[-latex] (0.5,0)->(4.5,0);
\draw[-latex] (3.1,4.2)->(6,0.5);
\end{tikzpicture}
\caption{The composition $\psi=\Phi\circ\Theta$ \label{triangle}}
\end{figure}

Since one can insert $n-2k$ level steps into a Dyck path of semilength $k$ (counted by $C_k$) in $\binom{n}{2k}$ distinct ways, to obtain an $n$-Motzkin path with exactly $k$ up steps, we immediately get the following generating function for the Eulerian (i.e., for the Eulerian statistic $\des$) distribution over $\fS_n(321,\beta)$. This is a polynomial refinement of Conjecture~\ref{conj:Archer-Laudone}~(3).
\begin{corollary}
For every $n\ge1$,
\begin{align}
	& \sum_{\sigma\in \fS_n(321,\beta)} t^{\des(\sigma)} = \sum_{w\in\cM_n} t^{\nou(w)} =
	\sum_{k=0}^{\lfloor n/2\rfloor}\binom{n}{2k}C_k t^k=\frac{1}{n+1}\sum_{k=0}^{\lfloor n/2\rfloor}\binom{n+1}{k,k+1,n-2k}t^k.
\end{align}
\end{corollary}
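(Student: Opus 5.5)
The first equality follows from Theorem~\ref{thm:beta-Biane}. The map $\psi=\Phi\circ\Theta$ is a bijection from $\cM_n$ to $\fS_n(321,\beta)$ with $\nou(w)=\des(\psi(w))$. Summing $t^{\des(\sigma)}$ over $\fS_n(321,\beta)$ is therefore the same as summing $t^{\nou(w)}$ over $\cM_n$. No further work is needed for this step beyond citing \eqref{stat-psi}.

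For the second equality, I would classify Motzkin paths $w\in\cM_n$ by $k=\nou(w)$. Then $w$ has exactly $k$ $\rD$ steps and $n-2k$ level steps, so $0\le k\le\lfloor n/2\rfloor$. Deleting the $\rL$ steps leaves a word in $\rU,\rD$ of semilength $k$. It never goes below the axis, because level steps do not change height. Hence it is a Dyck path, and there are $C_k$ choices. Conversely, any choice of the $2k$ positions among $n$ for the non-level steps, together with any Dyck path placed in those positions, yields a valid Motzkin path. This is a bijection
$$\{w\in\cM_n:\nou(w)=k\}\;\longleftrightarrow\;\binom{[n]}{2k}\times\Dyck_k,$$
which gives the coefficient $\binom{n}{2k}C_k$ of $t^k$.

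The last equality is a routine algebraic identity, checked termwise. Using $C_k=\frac{(2k)!}{k!(k+1)!}$, we get
$$\binom{n}{2k}C_k=\frac{n!}{(2k)!(n-2k)!}\cdot\frac{(2k)!}{k!(k+1)!}=\frac{n!}{k!(k+1)!(n-2k)!}.$$
This equals $\frac{1}{n+1}\cdot\frac{(n+1)!}{k!(k+1)!(n-2k)!}=\frac{1}{n+1}\binom{n+1}{k,k+1,n-2k}$, since $k+(k+1)+(n-2k)=n+1$.

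None of these steps is a real obstacle. The only point that needs care is the Motzkin–Dyck decomposition, namely that deleting or inserting level steps preserves nonnegativity. This holds because the height after each step depends only on the $\rU$ and $\rD$ steps seen so far.
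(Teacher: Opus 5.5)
Your proposal is correct and follows the paper's argument. The first equality comes from the bijection $\psi=\Phi\circ\Theta$ of Theorem~\ref{thm:beta-Biane} via \eqref{stat-psi}, the second from inserting $n-2k$ level steps into a Dyck path of semilength $k$ in $\binom{n}{2k}$ ways, and the last is the factorial identity you verify; you simply spell out what the paper states in one sentence, namely the explicit correspondence with $\binom{[n]}{2k}\times\Dyck_k$ and the multinomial computation.
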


\subsection{A second proof and the ``UUU'' statistic over Dyck paths}
Our second proof of Conjecture~\ref{conj:Archer-Laudone}~(3) approaches the problem from a different perspective, yet remains fundamentally bijective in nature. Seeing the trivial inclusion $\fS_n(321,\beta)\subseteq\fS_n(321)$, we treat $\beta$ as a frequency counting statistic over the set of $321$-avoiding permutations. Namely, we let $\beta(\sigma)$ denote the number of occurrences of the arrow pattern $\beta=(12;1\to 2)$ in $\sigma$, and derive the following Theorem~\ref{thm:beta-Kra}. 

We first recall a bijection $\PK$ due to Krattenthaler \cite[Section 4]{Kra2001} (see also \cite[Chapter 1.5]{EC1}) that sends each $321$-avoiding permutation to a Dyck path. Let $\Dyck_n$ denote the set of Dyck paths of semilength $n$. Given a permutation $\sigma\in\fS_n(321)$, let 
$$\LRMax(\sigma)=\{m_1=\sigma_1,m_2,\ldots,m_k=n\}_{<}\, .$$  
We see that $\sigma$ has a unique decomposition
$$\sigma = m_1v_1m_2v_2m_3\cdots m_{k-1}v_{k-1}m_kv_k,$$
where each subword $v_i$ is either empty or monotonically increasing (to avoid pattern $321$). The $321$-avoidance forces the concatenation $v_1v_2\cdots v_k$ to be increasing as well. Beginning initially with $w^{(0)}=\varnothing$, the empty word, for each $i=1,2,\ldots,k$ we append to $w^{(i-1)}$ $(m_i-m_{i-1})$ copies of $\rU$ (with a convention that $m_0=0$), followed by $(|v_i|+1)$ copies of $\rD$, and denote the newly obtained word by $w^{(i)}$. Here $|v_i|$ refers to the length of word $v_i$. The final word $w^{(k)}$ is taken as our image Dyck path $\PK(\sigma)$. The reader is referred to \cite{Kra2001} or \cite{EC1} for a complete proof that $\PK$ is indeed a well-defined bijection. 

If we represent permutation $\sigma$ using the associated \emph{permutation matrix} $P_{\sigma}$\footnote{the $(i,j)$-entry of $P_{\sigma}$ is $1$ if and only if $\sigma_i=j$, and we replace each $1$ by a cross ``$\mathrm{X}$'' for better illustration.}, then its image $\PK(\sigma)$ is seen to be the closest Dyck path to the diagonal that bounds all crosses to its southeast. See Fig.~\ref{Fig:K-map} below for an example of $\PK$, where the image Dyck path has been rotated counterclockwise $45\degree$ for better illustration.

\begin{figure}[htbp]
	\centering
	\resizebox{0.96\textwidth}{!}{%
		\begin{tikzpicture}[scale=0.78, every node/.style={font=\small}]
			
			\begin{scope}[shift={(0,0)}]
				
				\node at (4,9) {$\sigma=24137856 \in \fS_8(321)$};

				\draw[step=1, gray!60] (0,0) grid (8,8);
				
				\draw[dashed, thick] (0,0) -- (8,8);
				
				\foreach \j in {1,...,8}
				\node at (\j-0.5,-0.35) {\j};
				\foreach \i in {1,...,8}
				\node at (-0.35,\i-0.5) {\i};
				
				\node at (4,-0.85) {positions};
				\node[rotate=90] at (-0.95,4) {values};
				
				\foreach \i/\j in {1/2,2/4,3/1,4/3,5/7,7/5,6/8,8/6}{
					\node at (\i-0.5,\j-0.5) {\Large $\times$};
				}
				
				\foreach \i/\j in {1/2,2/4,5/7,6/8}{
					\draw[thick] (\i-0.5,\j-0.5) circle (0.30);
				}
				
			\end{scope}
			
			\draw[->, very thick] (8.65,4) -- (10.45,4);
			\node at (9.55,4.45) {$\PK$};
			
			\begin{scope}[shift={(11.2,0)}]
				
				\node at (4,9) {corresponding Dyck path $\PK(\sigma)\in\Dyck_8$};

				\draw[step=1, gray!60] (0,0) grid (8,8);
				
				\draw[dashed, thick] (0,0) -- (8,8);
				
				\foreach \j in {0,...,8}
				\node at (\j,-0.35) {\j};
				\foreach \i in {0,...,8}
				\node at (-0.35,\i) {\i};
				
				\draw[very thick, red]
				(0,0) -- (0,2) -- (1,2) -- (1,4) -- (4,4)
				-- (4,7) -- (5,7) -- (5,8) -- (8,8);
				
				\foreach \x/\y in {0/2,1/4,4/7,5/8}{
					\fill[red] (\x,\y) circle (2.2pt);
				}
				
			\end{scope}
			
		\end{tikzpicture}%
	}
	\caption{An example of $\PK$ \label{Fig:K-map}}
\end{figure}

Given any path $w\in\Dyck_n$, let us denote by $\tu(w)$ the number of occurrences of three consecutive $\rU$'s in $w$, and let $\rev(w)$ be the Dyck path derived from reversing $w$. Taking the Dyck path $w=\rU\rU\rD\rU\rU\rD\rD\rD\rU\rU\rU\rD\rU\rD\rD\rD$ shown in Fig.~\ref{Fig:K-map} as an example, we have $\tu(w)=1$ and $\rev(w)=\rU\rU\rU\rD\rU\rD\rD\rD\rU\rU\rU\rD\rD\rU\rD\rD$. Interestingly, this statistic $\tu$ also appears in a recent work \cite{FF2026} by Fang and the first author, where it is related to certain statistics on regions of the Catalan arrangement.

\begin{theorem}\label{thm:beta-Kra}
The mapping $\Psi:=\rev\circ \PK:\fS_n(321)\to\Dyck_n$ is a bijection such that for every $\sigma\in\fS_n(321)$, we have
\begin{align}\label{stat-PK}
	\beta(\sigma) &= \tu(\Psi(\sigma)).
\end{align}
In particular,
\begin{align}\label{eq:tu=0}
	a_n(321,\beta)=|\{w\in\Dyck_n: \tu(w)=0\}|=M_n,
\end{align} 
thus Conjecture~\ref{conj:Archer-Laudone}~(3) holds true.
\end{theorem}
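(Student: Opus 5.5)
The plan is to compute both sides of \eqref{stat-PK} explicitly in terms of the decomposition $\sigma=m_1v_1m_2v_2\cdots m_kv_k$ used to define $\PK$, and to show that both equal $\sum_{i=1}^k\max(|v_i|-1,0)$. The bijectivity of $\Psi$ is immediate: $\PK$ is a bijection by \cite{Kra2001}, and $\rev$ is an involution on $\Dyck_n$.

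\emph{Step 1: describe $\beta(\sigma)$ via $\Phi$.}
\begin{itemize}
\item Write $\hat\sigma=\Phi^{-1}(\sigma)$ in standard cycle notation. The cycle maxima are exactly the left-to-right maxima of $\sigma$, and erasing parentheses gives $\sigma$.
\item Hence for $\sigma_t$ we have $\hat\sigma(\sigma_t)=\sigma_{t+1}$ whenever $\sigma_{t+1}$ is not a left-to-right maximum.
\item Otherwise $\sigma_t$ is the last letter of its cycle, and $\hat\sigma(\sigma_t)$ is that cycle's maximum. This maximum lies weakly to the left of $\sigma_t$; when it equals $\sigma_t$ we have a fixed point.
\item An occurrence of $\beta=(12;1\to 2)$ is a pair $a<b$, with $a$ to the left of $b$ and $\hat\sigma(a)=b$. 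The observations above show that such pairs are exactly the ascents $\sigma_t<\sigma_{t+1}$ in which $\sigma_{t+1}\notin\LRMax(\sigma)$.
\end{itemize}

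\emph{Step 2: count these ascents in the $321$-avoiding decomposition.}
\begin{itemize}
\item A letter that is not a left-to-right maximum lies in some $v_i$.
\item If it is the first letter of $v_i$, its predecessor is $m_i$, which is larger, so this is a descent and contributes nothing.
\item Inside $v_i$ every adjacent pair is an ascent, since $v_i$ is increasing.
\item Therefore $\beta(\sigma)=\sum_{i}\max(|v_i|-1,0)$.
\end{itemize}

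\emph{Step 3: read the same quantity off $\Psi(\sigma)$.}
\begin{itemize}
\item By construction $\PK(\sigma)=\rU^{m_1-m_0}\rD^{|v_1|+1}\rU^{m_2-m_1}\rD^{|v_2|+1}\cdots$.
\item Each exponent $m_i-m_{i-1}$ is at least $1$, so the blocks $\rD^{|v_i|+1}$ are precisely the maximal runs of $\rD$'s.
\item As the example preceding the theorem confirms, $\rev$ reverses the word and swaps $\rU\leftrightarrow\rD$. Under $\rev$, these runs become exactly the maximal $\rU$-runs of $\Psi(\sigma)$, of lengths $|v_i|+1$.
\item A maximal run of $r$ consecutive $\rU$'s contains $\max(r-2,0)$ factors $\rU\rU\rU$, and distinct runs contribute disjointly.
\item Hence $\tu(\Psi(\sigma))=\sum_i\max(|v_i|-1,0)=\beta(\sigma)$, proving \eqref{stat-PK}.
\end{itemize}

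Finally, \eqref{eq:tu=0} follows because $\Psi$ restricts to a bijection from $\fS_n(321,\beta)=\{\sigma\in\fS_n(321):\beta(\sigma)=0\}$ onto $\{w\in\Dyck_n:\tu(w)=0\}$. The equality with $M_n$ can be quoted from Theorem~\ref{thm:beta-Biane}, or alternatively from the known fact that $\rU\rU\rU$-avoiding Dyck paths are counted by Motzkin numbers.

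The main obstacle I expect is Step 1: pinning down carefully, from the definition of $\Phi$ and of arrow containment, that the wrap-around values $\hat\sigma(\text{last letter of a cycle})$ never produce an occurrence. In particular one must check that fixed points and the returns to the cycle maximum are excluded, because $x_1,x_2$ must be distinct and $x_1$ must precede $x_2$ in $\sigma$. The remaining steps are bookkeeping.
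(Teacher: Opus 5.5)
Your proposal is correct and follows the paper's proof. It uses the same decomposition $\sigma=m_1v_1\cdots m_kv_k$, identifies $\beta$-occurrences with ascents inside the increasing blocks $v_i$ (you justify this via $\Phi$ more carefully than the paper, including the $\max(|v_i|-1,0)$ correction for empty blocks), and observes that the $\rD$-runs of length $|v_i|+1$ in $\PK(\sigma)$ become the maximal $\rU$-runs of $\Psi(\sigma)$.

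The only divergence is the final equality with $M_n$. The paper obtains it by setting $t=0$ in the functional equation for the $\tu$-generating function (OEIS A092107). Of your two options, prefer the count of $\rU\rU\rU$-avoiding Dyck paths over quoting Theorem~\ref{thm:beta-Biane}, since the latter would make this second proof depend on the first.
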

\begin{proof}
Since we already know that both $\PK$ and $\rev$ are bijections, it suffices to show \eqref{stat-PK} and \eqref{eq:tu=0}. Recall that a $321$-avoiding permutation $\sigma$ has a unique decomposition
$$\sigma = m_1v_1m_2v_2m_3\cdots m_{k-1}v_{k-1}m_kv_k,$$
where $m_1, m_2,\ldots, m_k$ are all of the left-to-right maxima, and the concatenation $v_1v_2\cdots v_k$ is monotonically increasing. Note that each occurrence of the arrow pattern $\beta$ corresponds to an ascent in a certain nonempty subword $v_i$. But now that each $v_i$ is increasing, it contributes $|v_i|-1$ to $\beta(\sigma)$. On the other hand, a consecutive run of $|v_i|+1$ copies of $\rD$ (with $|v_i|>0$) in $\PK(\sigma)$ corresponds to a run of $|v_i|+1$ copies of $\rU$ in $\rev(\PK(\sigma))=\Psi(\sigma)$, thereby contributing $|v_i|-1$ to $\tu(\Psi(\sigma))$. This proves \eqref{stat-PK}.

To see \eqref{eq:tu=0}, we consider the bivariate generating function $F(t,z):=\sum_{n\ge 0}z^n\sum_{w\in\Dyck_n}t^{\tu(w)}$, which satisfies the following functional equation (see \cite[A092107]{OEIS}):
\begin{align}\label{gf:tu}
	z(t+z-tz)F^2(t,z)-(1-z+tz)F(t,z)+1 = 0.
\end{align}
Setting $t=0$ in \eqref{gf:tu} results in 
$$z^2F^2(0,z)+(z-1)F(0,z)+1=0,$$
or equivalently,
$$F(0,z)-1 = zF(0,z)+z^2F^2(0,z).$$
This is the same equation satisfied by the generating function of the Motzkin numbers, so we have \eqref{eq:tu=0}.
\end{proof}

\section{Concluding remarks}

As highlighted by the work of Claesson and Kitaev~\cite{CK2008}, in the literature there are at least nine different bijections between $\fS_n(231)$ and $\fS_n(321)$. It is then natural to wonder if any of them could lead to a direct bijective proof of
$$a_n(231,\alpha)=a_n(321,\alpha).$$
Using the FindStat database~\cite{findstat}, we observe that the Simion-Schmidt bijection \cite{SS1985} (see also \cite[Section~3.3]{CK2008}) accomplishes exactly that. The details are left to the interested reader. Note that however, in contrast with \eqref{id:gf-213-231-q}, the stronger relation 
$$R^{231}(q,x)=R^{321}(q,x)$$ 
does NOT hold. For example, among permutations in $\fS_5(321)$, the only one that has two occurrences of $\alpha$ is $41523$, while in $\fS_5(231)$ both $51423$ and $51432$ have two occurrences of $\alpha$. For the reader's convenience, we include Table~\ref{tab:R^321(q,x)}, whose $(n,k)$-entry records the coefficient of $q^kx^n$ in $R^{321}(q,x)$ --- equivalently, the number of $321$-avoiding permutations of length $n$ with exactly $k$ occurrences of the arrow pattern $\alpha$.

\smallskip 
\begin{table}[!h]
\renewcommand{\arraystretch}{1.1}
    \centering
    \begin{tabular}{c|ccccccccccc}
        $n \backslash k$ & 0 & 1 & 2 & 3 & 4 & 5 & 6 & 7 & 8 & 9 \\ \hline
        1 & 1 \\ 
        2 & 2 \\ 
        3 & 5 \\ 
        4 & 13 & 1  \\ 
        5 & 34 & 7 & 1 \\ 
        6 & 89 & 32 & 9 & 2 \\
        7 & 233 & 122 & 50 & 20 & 3 & 1 \\
        8 & 610 & 422 & 223 & 121 & 35 & 15 & 3 & 1 \\
        9 & 1597 & 1376 & 879 & 579 & 240 & 124 & 43 & 18 & 4 & 2
    \end{tabular}
    \vspace{3mm}
    \caption{The distribution of the $\alpha$-count over $321$-avoiding permutations \label{tab:R^321(q,x)} }
\end{table}

In the same vein but with the roles of the classical and arrow patterns interchanged, we could consider the distributions of the number of occurrences of classical pattern $132$ or $312$ over $\fS_n(\alpha)$, i.e., the generating functions
\begin{align*}
	S^{132}(q,x) &:= 1+\sum_{n\ge 1}x^n\sum_{\sigma\in\fS_n(\alpha)}q^{132(\sigma)},\\
	S^{312}(q,x) &:= 1+\sum_{n\ge 1}x^n\sum_{\sigma\in\fS_n(\alpha)}q^{312(\sigma)}.
\end{align*}
Then by \cite[Theorem 3.5]{AL2026} and Theorem~\ref{thm:132-alpha}, we see that 
\begin{align*}
&[x^n]S^{132}(1,x)=[x^n]S^{312}(1,x)=|\fS_n(\alpha)|=S_{n-1},\\
&[x^n]S^{132}(0,x)=a_n(132,\alpha)=[x^n]S^{312}(0,x)=a_n(312,\alpha)=C_n.
\end{align*}
Hence both polynomials $s^{132}_n(q):=[x^n]S^{132}(q,x)$ and $s^{312}_n(q):=[x^n]S^{312}(q,x)$ interpolate between the large Schr\"oder number $S_{n-1}$ and the Catalan number $C_n$. It might be of independent interest to calculate $S^{132}(q,x)$, $S^{312}(q,x)$, and investigate the Tables~\ref{tab:S^132(q,x)} and \ref{tab:S^312(q,x)} below that consist of the coefficients of $\{s^{132}_n(q)\}_{n\ge 1}$ and $\{s^{312}_n(q)\}_{n\ge 1}$.

\smallskip 
\begin{table}[!h]
\renewcommand{\arraystretch}{1.1}
    \centering
    \begin{tabular}{c|cccccccccccccc}
        $n \backslash k$ & 0 & 1 & 2 & 3 & 4 & 5 & 6 & 7 & 8 & 9 & 10 & 11 & 12 \\ \hline
        1 & 1 \\ 
        2 & 2 \\ 
        3 & 5 & 1  \\ 
        4 & 14 & 3 & 4 & 1  \\ 
        5 & 42 & 9 & 13 & 10 & 9 & 4 & 3 \\ 
        6 & 132 & 28 & 41 & 32 & 48 & 18 & 37 & 17 & 17 & 13 & 9 & 0 & 2 \\
    \end{tabular}
    \vspace{3mm}
    \caption{The distribution of the $132$-count over $\alpha$-avoiding permutations \label{tab:S^132(q,x)} }
\end{table}

\begin{table}[!h]
\renewcommand{\arraystretch}{1.1}
    \centering
    \begin{tabular}{c|cccccccccccccc}
        $n \backslash k$ & 0 & 1 & 2 & 3 & 4 & 5 & 6 & 7 & 8 & 9 & 10 & 11 & 12 \\ \hline
        1 & 1 \\ 
        2 & 2 \\ 
        3 & 5 & 1  \\ 
        4 & 14 & 4 & 3 & 1  \\ 
        5 & 42 & 15 & 13 & 10 & 5 & 2 & 3 \\ 
        6 & 132 & 56 & 53 & 41 & 38 & 17 & 26 & 9 & 5 & 10 & 5 & 0 & 2 \\
    \end{tabular}
    \vspace{3mm}
    \caption{The distribution of the $312$-count over $\alpha$-avoiding permutations \label{tab:S^312(q,x)} }
\end{table}

\section*{Acknowledgement} 
Shishuo Fu was partially supported by the Fundamental Research Funds for the Central Universities (grant no.~2025CDJ-IAISYB-008).

\end{document}